\documentclass[pmlr]{jmlr} 

\newcommand{\tagdsmode}{submission}

\makeatletter
\newcommand{\tagdsproceedings}{proceedings}

\ifx\tagdsmode\tagdsproceedings

\else\ifx\tagdsmode\tagdssubmission
  \def\ps@jmlrtps{%
    \let\@mkboth\@gobbletwo
    \def\@oddhead{\scriptsize Under Review at the 2nd Conference on Topology, Algebra, and Geometry in Data Science\hfill}%
    \let\@evenhead\@oddhead
    \def\@oddfoot{}%
    \let\@evenfoot\@oddfoot
  }

\else
  \def\ps@jmlrtps{%
    \let\@mkboth\@gobbletwo
    \def\@oddhead{}%
    \let\@evenhead\@oddhead
    \def\@oddfoot{}%
    \let\@evenfoot\@oddfoot
  }
\fi\fi
\makeatother

\usepackage{longtable}

\usepackage{booktabs}
\usepackage[load-configurations=version-1]{siunitx} 

\usepackage{amsmath,amssymb,latexsym,color}
\usepackage{hyperref}
\usepackage{algorithm}
\usepackage{algorithmic}
\usepackage{booktabs}
\usepackage{natbib}
\usepackage{enumitem}

\theorembodyfont{\upshape}
\theoremheaderfont{\scshape}
\theorempostheader{:}
\theoremsep{\newline}

\newtheorem{assumption}{Assumption}

\jmlrvolume{334}
\jmlryear{2026}
\jmlrworkshop{Topology, Algebra, and Geometry in Data Science}

\title[Scalable Graph Coreset Selection via Greedy Sampling]{Scalable Graph Coreset Selection via Greedy Sampling}

\ifx\tagdsmode\tagdssubmission

\else

\author{\Name{Zhaiming Shen} \Email{zshen49@gatech.edu}\\
\addr School of Mathematics, Georgia Institute of Technology, Atlanta, GA 30332, USA
\AND
\Name{Alexander Cloninger} \Email{acloninger@ucsd.edu}\\
\addr Department of Mathematics and Halicio{\u g}lu Data Science Institute, University of California, San Diego, La Jolla, CA 92093, USA
}

\fi

\ifx\tagdsmode\tagdsproceedings
\editor{Editor's name}
\fi

\begin{document}

\maketitle

\begin{abstract}
Sampling representative nodes from large graphs is fundamental to graph signal processing and network analysis, yet existing methods require access to the full graph Laplacian, making them impractical at scale. We propose a simple and effective column-selective graph sampling algorithm based on a minimum inner product greedy selection rule. At each iteration, the algorithm accesses only a small random subset of Laplacian columns, requiring no eigendecomposition or global graph traversal, making it well-suited for large-scale graphs where the full Laplacian cannot be stored in memory. We analyze the algorithm under the stochastic block model and show that, when the degree distribution is balanced across nodes, the algorithm achieves sampling proportional to cluster size, and that the resulting mean estimate is controlled for band-limited graph signals in the Paley-Wiener space, with the error decaying as inter-cluster connectivity weakens. Numerical experiments on both synthetic and real-world data validate the effectiveness of the proposed method.
\end{abstract}

\begin{keywords}
coreset selection, graph signal estimation, greedy sampling, scalability
\end{keywords}

\section{Introduction}
\label{sec:introduction}

Graphs provide a natural representation for relational data arising in diverse domains, including social networks~\citep{newman2002random, myers2014information}, biological systems~\citep{pavlopoulos2011using, aittokallio2006graph}, and signal processing~\citep{ortega2018graph, mateos2019connecting}, where nodes encode entities and edges encode pairwise interactions. A problem that arises frequently in such settings is estimating the aggregate behavior of a function defined on a graph — specifically, computing the sum or average of function values across all nodes. For example, in elections, opinion polls are designed to estimate the views of a networked population toward candidates \citep{lippmann2017public}. In environmental monitoring, tracking regional averages of temperature, humidity, and water quality enables early intervention against forest fires and water contamination \citep{yu2005real}. In healthcare, measuring population-level indicators such as average blood pressure, weight, and cholesterol informs health policy and disease prevention strategies \citep{choi2017gram}.

In many applications, however, it is infeasible or costly to collect observations at every node, making it desirable to identify a \emph{small, representative subset of nodes}, named \emph{coreset}, that captures the essential structure of the graph. For example, deploying sensors across every node in a network may be cost-prohibitive. Similarly, collecting health measurements such as blood pressure from individuals in remote or hard-to-reach areas can be logistically challenging and expensive. This motivates the problem of coreset selection from a large graph such that a weighted sum of function values over the subset faithfully approximates the sum over the entire graph.



Coreset selection methods can be broadly classified into deterministic and random sampling approaches. Deterministic methods \citep{anis2014towards,narang2013signal,marques2015sampling,sakiyama2019eigendecomposition,cloninger2021low,vahidian2020coresets} select vertices sequentially such that a target cost function is optimized at each step, whereas random methods \citep{puy2018random,perraudin2018global} sample vertices according to a precomputed probability distribution. In recent years, coreset methods have also been studied for better training deep learning models and graph neural networks (GNNs). These include \citet{ding2024spectral,mirzasoleiman2020coresets,campbell2018bayesian,balcilar2021analyzing}.
A common limitation across these methods is their reliance on global spectral information or pairwise similarities computed over the full graph. This makes them ill-suited for large-scale settings where only partial graph access is available at each iteration, which is a practical constraint that arises frequently when graphs are too large to be processed all at once.

To address this limitation, we propose a greedy graph corset sampling method that selects nodes by
iteratively choosing the candidate with the minimum inner product
between a random subset of Laplacian columns and a running residual
vector. The algorithm is inspired by Orthogonal Matching Pursuit 
(OMP) \citep{davis1994adaptive, pati1993orthogonal}, as well as the recent compressive sensing based local clustering approaches \citep{lai2020compressive, lai2023compressed,shen2023graph,shen2025advancing}. The proposed method has two key properties that distinguish it from existing
methods.
First, it accesses only a small random subset of nodes per iteration, requiring no eigendecomposition and no global
graph traversal.
Second, the minimum inner product selection rule implicitly promotes
diversity across clusters: by choosing the node whose Laplacian column
is most orthogonal to the residual, the algorithm avoids resampling
nodes from already-represented clusters.

The main contributions of this work are as follows.
\begin{itemize}
    \item We propose a simple yet effective greedy coreset sampling method for graphs that requires only local graph information at each iteration, without ever needing access to the full graph.
    \item We theoretically establish that under the stochastic block model (SBM) with balanced degree distribution, the proposed algorithm achieves sampling proportional to cluster size in expectation, without requiring any explicit knowledge of cluster labels or sizes.
    \item We further theoretically establish that this proportional sampling guarantees accurate estimation of band-limited graph signals defined in the Paley-Wiener space.
          \item We evaluate the proposed method on both synthetic and real-world graphs under the practical setting where only a partial subgraph is accessible at each iteration.
\end{itemize}

\paragraph{Organization.}
The rest of the paper is structured as follows. Section~\ref{sec:background} reviews the necessary background on graph signal processing and Paley-Wiener spaces on graphs. Section~\ref{sec:algorithm} presents the proposed sampling algorithm and discusses its computational and memory efficiency. Section~\ref{sec:theory} establishes the theoretical guarantees of the proposed method under the stochastic block model. Section~\ref{sec:experiments} reports experimental results on both synthetic and real-world datasets. Section~\ref{sec:conclusion} concludes with a summary and directions for future work.

\section{Preliminaries}
\label{sec:background}


\subsection{Graph Laplacian and its Spectral Decomposition}
 
Let $G = (V, E)$ be an undirected, unweighted graph with node set
$V = \{1, \ldots, n\}$ and edge set $E \subseteq V \times V$.
The adjacency matrix $A \in \{0,1\}^{n \times n}$ has entries
$A_{ij} = 1$ if $(i,j) \in E$ and $A_{ij} = 0$ otherwise.
The degree of node $i$ is $d_i = \sum_{j=1}^n A_{ij}$, collected into
the diagonal degree matrix $D = \mathrm{diag}(d_1, \ldots, d_n)$.
The \emph{unormalized graph Laplacian} is defined as $L:=D-A$.
The columns of $L$ play a central role in the proposed algorithm:
the $i$-th column $L_{:,i}$ encodes the local connectivity structure
of node $i$, with the diagonal entry reflecting its degree and the
off-diagonal entries indicating its neighbors.

Since $L$ is real symmetric, it admits an eigendecomposition:
\begin{equation}
    L = U \Lambda U^\top = \sum_{k=0}^{n-1} \lambda_k u_k u_k^\top,
    \label{eq:spectral}
\end{equation}
where $U = [u_0 \mid u_1 \mid \cdots \mid u_{n-1}]$ is the orthonormal
matrix of eigenvectors and $\Lambda = \mathrm{diag}(\lambda_0, \ldots,
\lambda_{n-1})$ contains the eigenvalues sorted in ascending order
$0 = \lambda_0 \leq \lambda_1 \leq \cdots \leq \lambda_{n-1}$.
The smallest eigenvector is always $u_0 = \frac{1}{\sqrt{n}}\mathbf{1}$
with eigenvalue $\lambda_0 = 0$.
The number of zero eigenvalues equals the number of connected components
of $G$.
 
 
\subsection{Graph Signal Processing and Paley-Wiener Space}
 
A \emph{graph signal} is a function $f : V \to \mathbb{R}$, represented
as a vector $f \in \mathbb{R}^n$ with $f(i)$ denoting the signal value
at node $i$. The \emph{graph Fourier transform} (GFT) of $f$ is:
\begin{equation}
    \hat{f} = U^\top f, \qquad \hat{f}_k = u_k^\top f,
    \label{eq:gft}
\end{equation}
with inverse $f = U\hat{f} = \sum_{k=0}^{n-1}\hat{f}_k u_k$.
The eigenvalue $\lambda_k$ plays the role of graph frequency: signals
concentrated on low-frequency eigenvectors (small $\lambda_k$) vary
slowly across the graph, while high-frequency components (large
$\lambda_k$) vary rapidly between adjacent nodes.
 
 
The \emph{Paley-Wiener space} on the graph is the space of
bandlimited signals with graph frequency below $\omega$:
\begin{equation}
    PW_\omega(L) = \bigl\{\, f \in \mathbb{R}^n :
    \hat{f}_k = 0 \text{ whenever } \lambda_k > \omega \,\bigr\}.
    \label{eq:pw}
\end{equation}
For $\omega$ chosen in the spectral gap $(\lambda_{K-1}, \lambda_K)$,
the space $PW_\omega(L)$ equals the span of the first $K$ eigenvectors:
$PW_\omega(L) = \mathrm{span}\{u_0, \ldots, u_{K-1}\}$.

\section{Scalable Graph Coreset Selection} \label{sec:algorithm}

\subsection{Proposed Method}

The proposed algorithm adopts a greedy strategy to iteratively construct a coreset by selecting one node each iteration. We describe the procedure below and summarize it in \algorithmref{alg_omp}. The following steps repeat until $T$ iterations.
\begin{enumerate}
    \item It maintains a residual vector $\mathbf{b} \in \mathbb{R}^n$, initialized to zero, which tracks the cumulative influence of already-selected nodes.
  
    \item At each iteration, a small candidate set $\mathcal{C}_t$ nodes is drawn uniformly at random.
    
    \item Computes the absolute inner product between each candidate column of $L$ and the current residual $\mathbf{b}$, measuring how aligned each candidate node is with the accumulated selection. 

    \item The node with the minimum inner product, indicating the least redundancy with previously selected nodes, is chosen, with ties broken uniformly at random. 
 
    \item The selected node is added to the coreset $\mathcal{X}$, the residual is updated by subtracting the corresponding column of $L$, and the selected column is set to $\infty$ 
to prevent reselection.
\end{enumerate}







\begin{algorithm}[t]
\caption{Greedy Graph Coreset Selection (GGCS) \label{alg_omp}} 
\begin{algorithmic}[1]
\REQUIRE unnormalized graph Laplacian $L \in \mathbb{R}^{n \times n}$, number of iterations $T$
\ENSURE Sampled node index set $\mathcal{X}$

\STATE $\mathbf{b} \leftarrow \mathbf{0} \in \mathbb{R}^{n}$, $\mathcal{X} \leftarrow \emptyset$ 
\FOR{$t = 1, \ldots, T$}
    \STATE Sample a small candidate set $\mathcal{C}_t \subset \{1, \ldots, n\}$ uniformly at random
    \STATE Compute inner products:
        $\mathbf{a} = \left| L_{:,\mathcal{C}_t}^\top \mathbf{b} \right| \in \mathbb{R}^{|\mathcal{C}_t|}$
    \STATE Find minimum inner product value:
        $m^* = \min_{j \in \mathcal{C}_t} \mathbf{a}_j$    
    \STATE Collect all minimizers:
        $\mathcal{I} = \left\{ j \in \mathcal{C}_t : \mathbf{a}_j = m^* \right\}$
and break ties uniformly at random: $i^* \leftarrow \text{UniformSample}(\mathcal{I})$
    \STATE Update selected set: $\mathcal{X} \leftarrow \mathcal{X} \cup \{i^*\}$ and update residual:
        $\mathbf{b} \leftarrow \mathbf{b} - L_{:,i^*}$
    \STATE Eliminate selected column: $L_{:,i^*} \leftarrow \boldsymbol{\infty}$ 
\ENDFOR
\end{algorithmic}
\end{algorithm}



\subsection{Scalability via Partial Graph Access}
\label{subsec:scalability}
 
A central advantage of the proposed algorithm over spectral and
deterministic methods is that it \emph{never accesses the full graph
Laplacian}. At each iteration $t$, it draws a random candidate set
$\mathcal{C}_t$ of size $s \ll n$ and reads only the $s$ columns
$L_{:,\mathcal{C}_t}$. No eigendecomposition, no pairwise kernel
matrix, and no global graph traversal is required.  Note that although the entire column $L_{:i}$ is sampled, it only encodes local information about which nodes share an edge with node $i$.  This can be readily accessed in relational databases without incurring substantial overhead or memory usage.

\paragraph{Time cost.}
At each iteration $t$, the computational work consists of three
steps. Column retrieval reads $s$ columns of $L$, each with at most
$d_{\max}$ nonzero entries, at cost $O(s \cdot d_{\max})$. Inner
product computation evaluates $\mathbf{a} = |L_{:,\mathcal{C}_t}^\top
\mathbf{b}|$ at the same cost $O(s \cdot d_{\max})$, since
$\mathbf{b} \in \mathbb{R}^n$ but each column has $O(d_{\max})$
nonzeros. The residual update $\mathbf{b} \leftarrow \mathbf{b} -
L_{:,i^*}$ reads one column with the worse case cost  $O(d_{\max})$. The total time cost over $T$
iterations is therefore $O\!\left(T \cdot s \cdot d_{\max}\right)$.

\paragraph{Memory cost.}
The memory requirement is determined by what must be held
\emph{simultaneously}, not what is accessed in total. At any point
in time the algorithm stores:
(i). The residual vector $\mathbf{b} \in \mathbb{R}^n$ with $O(n)$. (ii). The current column batch $L_{:,\mathcal{C}_t}$ with $O(s \cdot d_{\max})$ for sparse $L$. (iii). The inner product vector $\mathbf{a} \in \mathbb{R}^s$ with $O(s)$.

The $s$ columns in $\mathcal{C}_t$ are read, their inner products
computed, and then discarded — the full Laplacian $L$ is
\emph{never stored in memory}. The peak memory cost is therefore
$O\!\left(n + s \cdot d_{\max}\right)= O(n)$,
where the $O(n)$ term from $\mathbf{b}$ dominates when
$s \cdot d_{\max} \leq n$, which holds whenever the candidate
batch is much smaller than the full graph.
 
\paragraph{The key advantage.}
The memory cost $O(n)$ is the critical scalability gain: the
algorithm requires memory linear in the number of nodes, not
quadratic. This makes it applicable to graphs where $n$ is large
enough that even storing $L$ is infeasible. 

\section{Theoretical Analysis}
\label{sec:theory}

Let us present the theoretical analysis of  \algorithmref{alg_omp} under the stochastic block model Assumptions~\ref{ass:sbm}--\ref{ass:balanced}. It is worthwhile to note that
Assumptions~\ref{ass:sparse}--\ref{ass:balanced} are the key structural conditions under which the proposed algorithm achieves cluster-size-proportional sampling. Due to space constraints, we defer these assumptions to \appendixref{sec:assump}.

\subsection{Balanced Cluster Proportion}

The key quantity governing \algorithmref{alg_omp} is the inner product between
Laplacian columns, which equals the $(i,j)$ entry of $L^2$. Our first \lemmaref{lem:l2_exact} characterizes the inner product between columns $i$ and $j$ of the Laplacian depending on whether they belong to the same cluster or to different clusters.
 
\begin{lemma}[Exact $L^2$ Entry and SBM Expectations]
\label{lem:l2_exact}
For any $i \neq j$:
\begin{equation}
    (L^2)_{ij} = |\mathcal{N}(i) \cap \mathcal{N}(j)|
    - (d_i + d_j)\,\mathbf{1}[(i,j) \in E],
    \label{eq:l2_exact2}
\end{equation}
where $\mathcal{N}(i) = \{k \neq i : (i,k)\in E\}$ and $d_i =
|\mathcal{N}(i)|$. Under Assumptions~\ref{ass:sbm}--\ref{ass:sparse}:
\begin{equation}
    \mathbb{E}[(L^2)_{ij}] = \begin{cases}
        -n_a p_a^2\!\left(1 + O\!\left(\dfrac{q_{\max}}{p_a}\right)\right)
        & i,j \in V_a, \\[10pt]
        \displaystyle\sum_{c \neq a,b} n_c q_{ac} q_{bc}
        + O(n q_{\max}^2) = O(n q_{\max}^2)
        & i \in V_a,\, j \in V_b,\, a \neq b.
    \end{cases}
    \label{eq:l2_simplified}
\end{equation}
\end{lemma}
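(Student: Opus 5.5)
The plan is to prove the exact identity by expanding the matrix product, and then to take expectations term by term using the independence structure of the SBM.

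\emph{Exact identity.} I will write $(L^2)_{ij} = \sum_{k} L_{ik}L_{kj}$ and split the sum into the terms $k=i$, $k=j$ and $k\notin\{i,j\}$. Since $L_{ii}=d_i$ and $L_{ij}=-A_{ij}$ for $i\neq j$:
\begin{itemize}
\item the term $k=i$ contributes $d_i\cdot(-A_{ij})$;
\item the term $k=j$ contributes $(-A_{ij})\cdot d_j$;
\item each $k\notin\{i,j\}$ contributes $(-A_{ik})(-A_{kj})=A_{ik}A_{kj}$.
\end{itemize}
Summing the last group gives $|\mathcal N(i)\cap\mathcal N(j)|$. Adding the first two gives $-(d_i+d_j)\mathbf 1[(i,j)\in E]$, which is \eqref{eq:l2_exact2}.

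\emph{Expectations.} Under Assumption~\ref{ass:sbm} the edges are independent Bernoulli variables. This gives two facts.
\begin{itemize}
\item $\mathbb E|\mathcal N(i)\cap\mathcal N(j)| = \sum_{k\neq i,j}\mathbb P(A_{ik}=1)\,\mathbb P(A_{jk}=1)$.
\item For the degree term, I decompose $d_i = A_{ij} + d_i^{(-j)}$, where $d_i^{(-j)}$ is independent of $A_{ij}$. Then $\mathbb E[d_iA_{ij}] = P_{ij}\bigl(1+\mathbb E d_i^{(-j)}\bigr)$, where $P_{ij}$ is the edge probability.
\end{itemize}

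Case $i,j\in V_a$: the common-neighbour count has mean $(n_a-2)p_a^2+\sum_{c\neq a}n_cq_{ac}^2$. Each degree term has mean $p_a\bigl(1+(n_a-2)p_a+\sum_{c\neq a}n_cq_{ac}\bigr)$. Combining,
\[
\mathbb E[(L^2)_{ij}] = -(n_a-2)p_a^2 - 2p_a - 2p_a\sum_{c\neq a}n_cq_{ac} + \sum_{c\neq a}n_cq_{ac}^2 .
\]
Factoring out $-n_ap_a^2$, the relative corrections are as follows.
\begin{itemize}
\item $p_a\sum_c n_c q_{ac}/(n_ap_a^2)$ and $\sum_c n_c q_{ac}^2/(n_ap_a^2)$ are $O(q_{\max}/p_a)$, provided cluster sizes are comparable ($n_c/n_a=O(1)$). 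I expect Assumption~\ref{ass:sparse} or the balancing assumption to supply this.
\item $2/(n_a)$ and $2/(n_ap_a)$ also need to be $O(q_{\max}/p_a)$. This amounts to $n_aq_{\max}\gtrsim 1$, i.e.\ the inter-cluster expected degree is bounded below.
\end{itemize}

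Case $i\in V_a$, $j\in V_b$, $a\neq b$: common neighbours come from three places.
\begin{itemize}
\item $V_a$ contributes $(n_a-1)p_aq_{ab}$.
\item $V_b$ contributes $(n_b-1)p_bq_{ab}$.
\item Each other cluster $V_c$ contributes $n_cq_{ac}q_{bc}$.
\end{itemize}
The degree terms contribute
\[
-q_{ab}\bigl(2+(n_a-1)p_a+(n_b-1)p_b+\textstyle\sum_{c\neq a}n_cq_{ac}+\sum_{c\neq b}n_cq_{bc}\bigr).
\]
The key observation is that the large $n p\,q$ pieces cancel exactly between the common neighbours inside $V_a\cup V_b$ and the within-cluster parts of the degrees. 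What remains is $\sum_{c\neq a,b}n_cq_{ac}q_{bc} - 2q_{ab} - q_{ab}\cdot O(nq_{\max})$. The last term is $O(nq_{\max}^2)$. The term $2q_{ab}$ is $O(nq_{\max}^2)$ again under $nq_{\max}\gtrsim1$. Finally, the sum is itself $O(nq_{\max}^2)$, giving the second line of \eqref{eq:l2_simplified}.

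\emph{Main obstacle.} The main obstacle is bookkeeping rather than ideas. The within-cluster $\Theta(np_aq_{ab})$ terms must be tracked carefully so that the exact cancellation in the cross-cluster case is visible; a careless bound there would only give $O(np_{\max}q_{\max})$. I also need to check which conditions in Assumption~\ref{ass:sparse} absorb the lower-order terms $2p_a$ and $2q_{ab}$, namely comparable cluster sizes and $nq_{\max}$ bounded below. If the assumption is stated differently, I would adjust the error term accordingly.
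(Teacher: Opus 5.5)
Your proposal is correct and follows the paper's proof: split $\sum_k L_{ik}L_{kj}$ at $k=i$, $k=j$ and $k\notin\{i,j\}$, take expectations term by term under the SBM, and use the exact cancellation of the $(n_a-1)p_aq_{ab}$ and $(n_b-1)p_bq_{ab}$ pieces in the cross-cluster case. Your handling of $\mathbb{E}\bigl[d_i\mathbf{1}[(i,j)\in E]\bigr]$ via $d_i=A_{ij}+d_i^{(-j)}$ is more careful than the paper's, which uses $P_{ij}\,\mathbb{E}[d_i]$ and so silently drops $-2p_a(1-p_a)$ and $-2q_{ab}(1-q_{ab})$; moreover, the side conditions you isolate (comparable cluster sizes and $n_aq_{\max}$ bounded below) are not implied by Assumptions~\ref{ass:sparse}--\ref{ass:balanced} (for $i,j\in V_a$ and $q_{\max}=0$ the exact mean is $-n_ap_a^2-2p_a(1-p_a)$, not $-n_ap_a^2$), so you must either add them as hypotheses or weaken the error to a relative $O\bigl((1+nq_{\max})/(n_ap_a)\bigr)$ in the first case and an absolute $O(nq_{\max}^2+q_{\max})$ in the second.
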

 

\algorithmref{alg_omp} maintains a residual $\mathbf{b}_t =
-\sum_{s=1}^{t} L_{:i_s}$ and selects the node with minimum
$|L_{:j}^\top \mathbf{b}_t|$ from a random candidate set.
The following \theoremref{thm:residual} derives an exact expression for the expected inner product at each iteration and establishes that the algorithm samples nodes in proportion to cluster size based on \lemmaref{lem:l2_exact}.

\begin{theorem} [Sampling Proportional to Cluster Size]\label{thm:residual}
Under Assumptions~\ref{ass:sbm}--\ref{ass:balanced}, for any candidate
node $j \in V_a$ and any deterministic selected set
$\mathcal{X}_t = \{i_1,\ldots,i_{t}\}$ at $t$-th iteration, it satisfies at $(t+1)$-th iteration:
\begin{equation}
    \mathbb{E}\!\left[L_{:,j}^\top \mathbf{b}_t
    \,\middle|\, \mathcal{X}_t\right]
    = m_a^{(t)} \cdot n_a p_a^2 + \eta_a^{(t)},
    \label{eq:residual_exact}
\end{equation}
where $m_a^{(t)} = |\mathcal{X}_t \cap V_a|$ and the remainder
satisfies:
\begin{equation}
    |\eta_a^{(t)}|
    \leq m_a^{(t)} \cdot O(n p_a q_{\max})
    + \sum_{c \neq a} m_c^{(t)} \cdot O(n q_{\max}^2).
    \label{eq:eta_bound}
\end{equation}
Consequently,  $|\eta_a^{(t)}|=o(m_a^{(t)} n_a p_a^2)$. Furthermore, suppose that $\frac{m_a^{(t)}}{n_a}$ is the smallest among all $a=1,\cdots,K$ up to tolerance $|\eta_a^{(t)}|$, then in expectation \algorithmref{alg_omp} selects a column in $V_a\cap\mathcal{C}_{t+1}$ in the $(t+1)$-th iteration. 
\end{theorem}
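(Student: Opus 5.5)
The plan is to expand the residual linearly and reduce everything to Lemma~\ref{lem:l2_exact}. Since $\mathbf{b}_t = -\sum_{s=1}^t L_{:,i_s}$ and $L$ is symmetric, we have
\begin{equation*}
L_{:,j}^\top \mathbf{b}_t = -\sum_{s=1}^t (L^2)_{j i_s}.
\end{equation*}
We condition on the deterministic set $\mathcal{X}_t$ and take expectations over the SBM randomness. By linearity,
\begin{equation*}
\mathbb{E}\bigl[L_{:,j}^\top \mathbf{b}_t \mid \mathcal{X}_t\bigr] = -\sum_{s} \mathbb{E}\bigl[(L^2)_{j i_s}\bigr].
\end{equation*}
We then split the indices $i_s$ according to the cluster containing them. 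Throughout we take $j\notin\mathcal{X}_t$, which is justified because selected columns are set to $\infty$. Hence every term has $i_s\neq j$, and formula~\eqref{eq:l2_exact2} applies.

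For the $m_a^{(t)}$ selected nodes lying in $V_a$, Lemma~\ref{lem:l2_exact} gives
\begin{equation*}
-\mathbb{E}[(L^2)_{j i_s}] = n_a p_a^2\bigl(1+O(q_{\max}/p_a)\bigr) = n_a p_a^2 + O(n_a p_a q_{\max}).
\end{equation*}
Summing over these nodes produces the main term $m_a^{(t)} n_a p_a^2$, plus an error of $m_a^{(t)}\, O(n p_a q_{\max})$ after bounding $n_a\le n$. For the $m_c^{(t)}$ selected nodes in a cluster $V_c$ with $c\neq a$, the cross-cluster case of the lemma gives $O(nq_{\max}^2)$ per term. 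Collecting both contributions into $\eta_a^{(t)}$ yields \eqref{eq:residual_exact} together with the bound \eqref{eq:eta_bound}.

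To prove $|\eta_a^{(t)}| = o(m_a^{(t)} n_a p_a^2)$, we use the sparsity and balance assumptions. First, $q_{\max}/p_a\to 0$ together with $n_a=\Theta(n)$ shows that the same-cluster error is $o(m_a n_a p_a^2)$. The cross-cluster sum $\sum_c m_c\, n q_{\max}^2$ is harder, because it need not be dominated by $m_a n_a p_a^2$ when $m_a$ is small, for example $m_a=0$. I would handle it by invoking the balance assumption in the form $m_c\lesssim t$ and $n q_{\max}^2 \ll n_a p_a^2/K$, or else interpret the $o(\cdot)$ statement as applying when $m_a^{(t)}\ge1$, with the $m_a=0$ case treated separately, where the expectation is already $O(t n q_{\max}^2)$, the smallest possible. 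I expect this to be the main obstacle: making the little-$o$ claim honest requires pinning down exactly what Assumption~\ref{ass:balanced} provides, namely comparable $n_a p_a^2$ across clusters and degree balance.

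For the selection claim, note that the expected absolute inner product for a candidate in $V_b$ is approximately $m_b^{(t)} n_b p_b^2$. Degree balance gives $n_b p_b\approx d$ uniformly in $b$, so this quantity is approximately $d\,p_b\, n_b\,(m_b/n_b)$. The step I would check carefully is whether balance makes $p_b$ comparable across clusters, so that the ordering of the quantities $m_b/n_b$ determines the ordering of the expected scores. Given this, if $m_a/n_a$ is smallest up to the tolerance $|\eta_a^{(t)}|$, then candidates in $V_a\cap\mathcal{C}_{t+1}$ have the smallest expected score. Hence the minimum-inner-product rule, in expectation and replacing random quantities by their means, picks a column from $V_a\cap\mathcal{C}_{t+1}$, with ties broken uniformly. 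The taking of absolute values is harmless here, because the main term is nonnegative and dominates the remainder.
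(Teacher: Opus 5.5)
Your expansion $L_{:,j}^\top\mathbf{b}_t=-\sum_s(L^2)_{ji_s}$, the split by cluster, and the substitution of Lemma~\ref{lem:l2_exact} match the paper's derivation of \eqref{eq:residual_exact}--\eqref{eq:eta_bound}. The gap is in the selection claim. You leave it conditional on checking ``whether balance makes $p_b$ comparable across clusters,'' and that check fails. Assumption~\ref{ass:balanced} forces $p_b=\omega_n/n_b$, so the $p_b$ differ exactly as the cluster sizes do; the paper's own SBM experiment has $n_a=2000,1000,500$ with $p_a=0.01,0.02,0.04$. Approximate comparability of the $p_b$ would in any case only order the scores up to constant factors. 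The missing idea is to apply the balance condition to both factors of $n_bp_b$:
\begin{equation*}
m_b^{(t)}\,n_b\,p_b^2=\frac{m_b^{(t)}}{n_b}\,(n_bp_b)^2=\omega_n^2\,\frac{m_b^{(t)}}{n_b}.
\end{equation*}
Your own expression $d\,p_b\,n_b\,(m_b/n_b)$ is already $d^2\,m_b/n_b$, so you stopped one cancellation short. This identity is also what gives ``smallest up to tolerance'' a precise meaning, because $|\eta_a^{(t)}|$ lives on the scale of the scores, not the ratios. The paper assumes
\begin{equation*}
\frac{m_a^{(t)}}{n_a}+\frac{|\eta_a^{(t)}|}{\omega_n^2}<\frac{m_b^{(t)}}{n_b}-\frac{|\eta_b^{(t)}|}{\omega_n^2}\qquad\text{for all } b\neq a,
\end{equation*}
and multiplies through by $\omega_n^2$. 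For $j_a\in V_a$ and $j_b\in V_b$ this gives
\begin{equation*}
\mathbb{E}\bigl[L_{:,j_a}^\top\mathbf{b}_t\mid\mathcal{X}_t\bigr]\le m_a^{(t)}n_ap_a^2+|\eta_a^{(t)}|<m_b^{(t)}n_bp_b^2-|\eta_b^{(t)}|\le\mathbb{E}\bigl[L_{:,j_b}^\top\mathbf{b}_t\mid\mathcal{X}_t\bigr],
\end{equation*}
which is the entire argument.

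Your worry about $|\eta_a^{(t)}|=o(m_a^{(t)}n_ap_a^2)$ is well founded and is not a defect of your proposal. If $m_a^{(t)}=0$ while some $m_c^{(t)}>0$, the cross-cluster expectations are nonzero in general; for instance, with all $q_{ab}=q>0$ one gets $\mathbb{E}[(L^2)_{ij}]=-q^2(n-2)-2q$. So the claim is false as written in that case. For $m_a^{(t)}\ge 1$ and fixed $K$, use $p_{\min}=\omega_n/n_{\max}$ and $n\le Kn_{\max}$. These give $nq_{\max}/(n_ap_a)=o(1)$ and $nq_{\max}^2/(n_ap_a^2)=o(1)$, so you do not need $n_a=\Theta(n)$. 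What you do additionally need is something like $t/m_a^{(t)}=O(1)$. The paper's proof passes over this point as well: it states the little-$o$ consequence without argument. The selection argument above does not rely on the little-$o$ claim, since it carries $|\eta_a^{(t)}|$ and $|\eta_b^{(t)}|$ explicitly in its hypothesis.
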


\theoremref{thm:residual} shows that $|L_{:,j}^\top\mathbf{b}_t|$ is
determined to leading order by $m_a^{(t)}\cdot n_ap_a^2$, with a
multiplicative correction $O(q_{\max}/p_a)$ that vanishes under
Assumption~\ref{ass:sparse}. Intuitively, \theoremref{thm:residual} guarantees that \algorithmref{alg_omp} always favors selecting a node from the cluster that is currently most underrepresented, thereby maintaining sampling proportional to cluster size.

\begin{figure}[t]
    \centering
    \begin{tabular}{ccc}
      \vspace{-7mm}\includegraphics[width=0.32\linewidth]{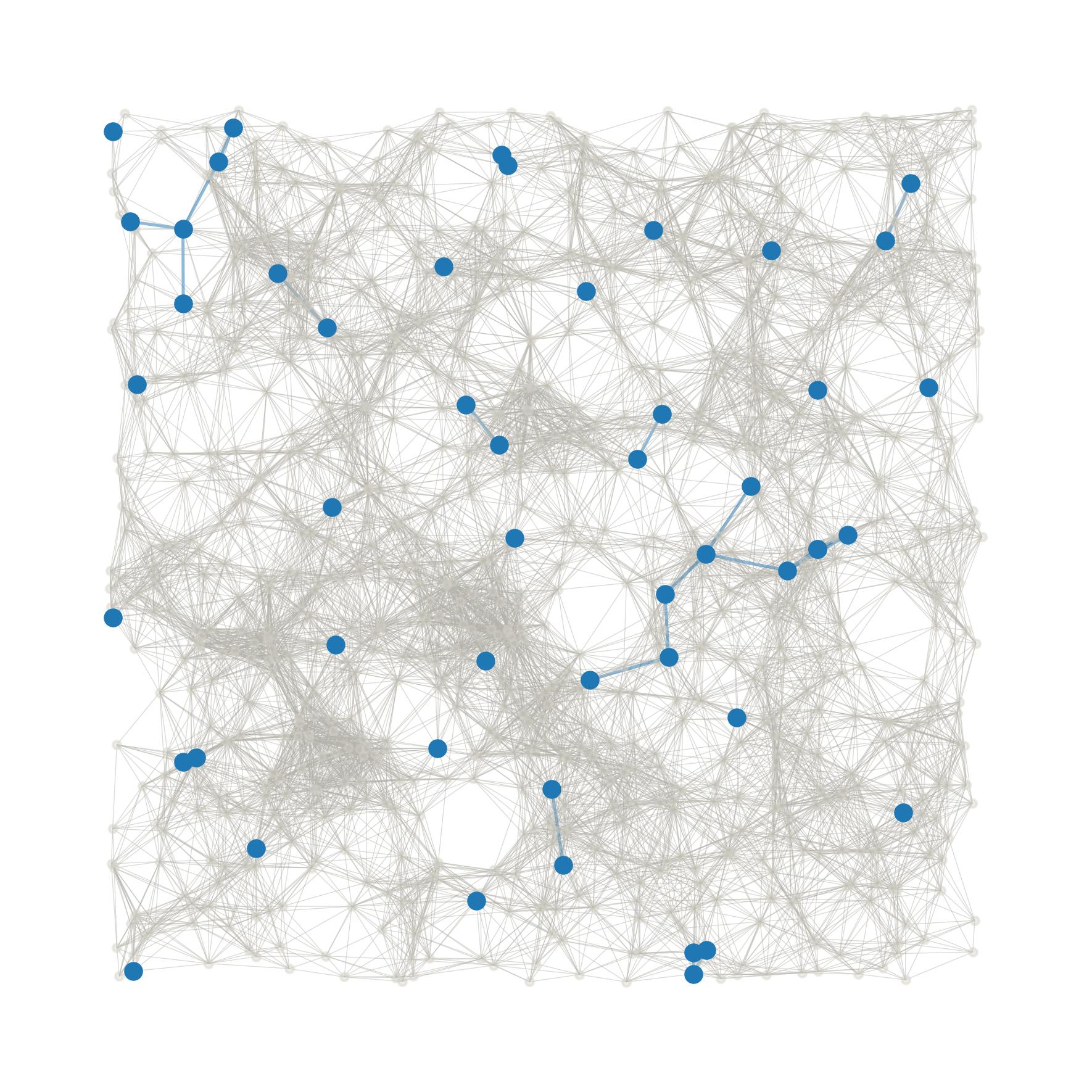}   & \includegraphics[width=0.32\linewidth]{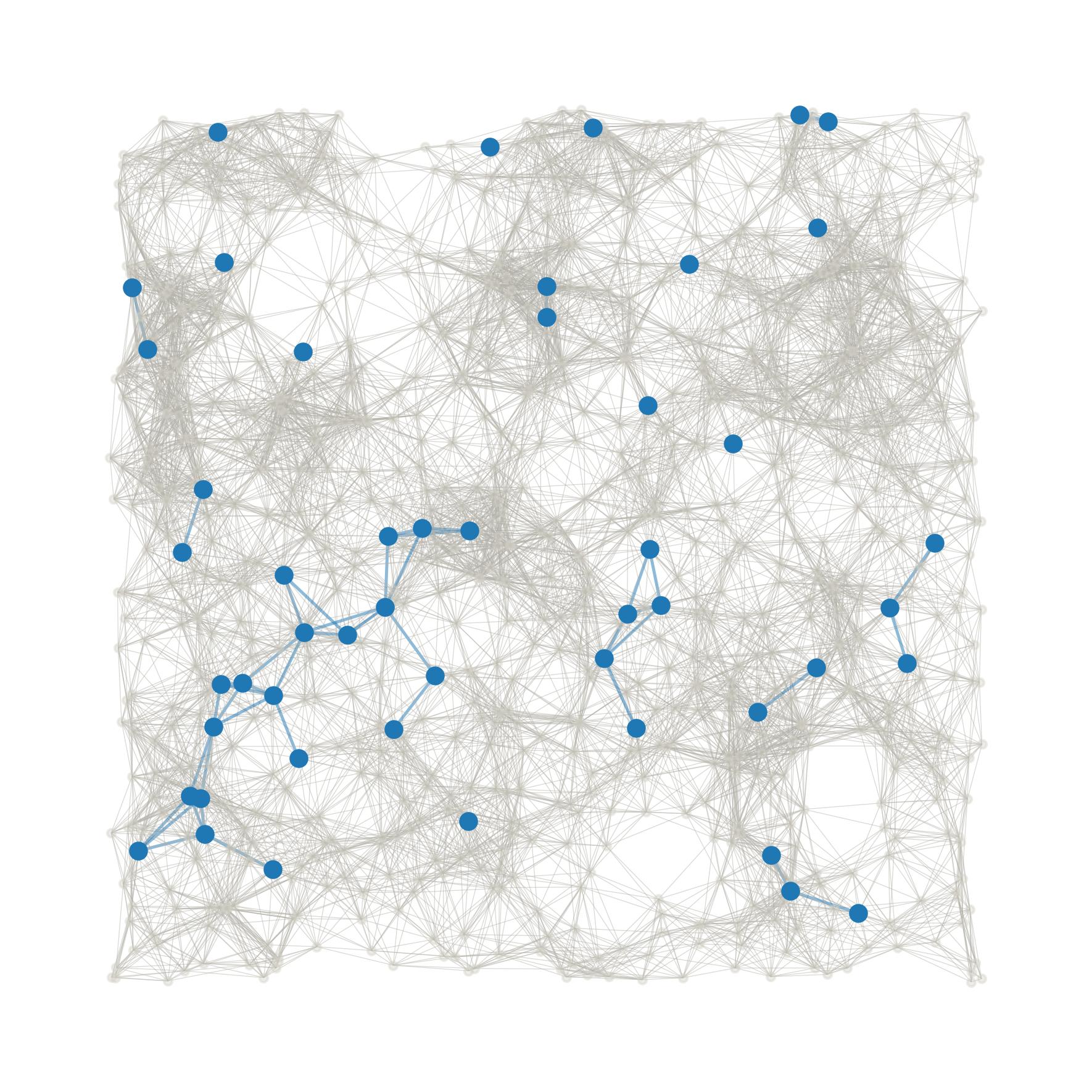} & \includegraphics[width=0.32\linewidth]{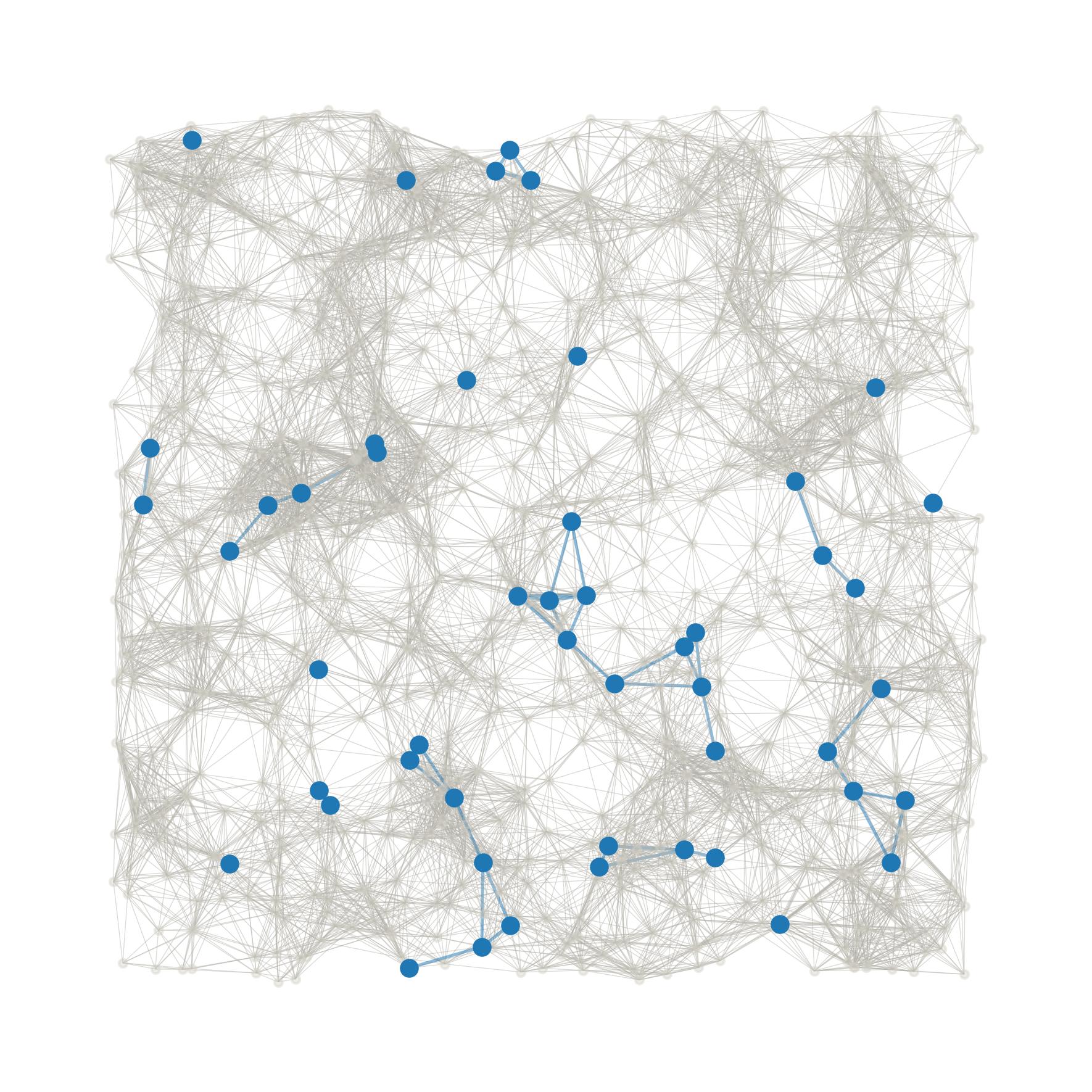} \\ \includegraphics[width=0.32\linewidth]{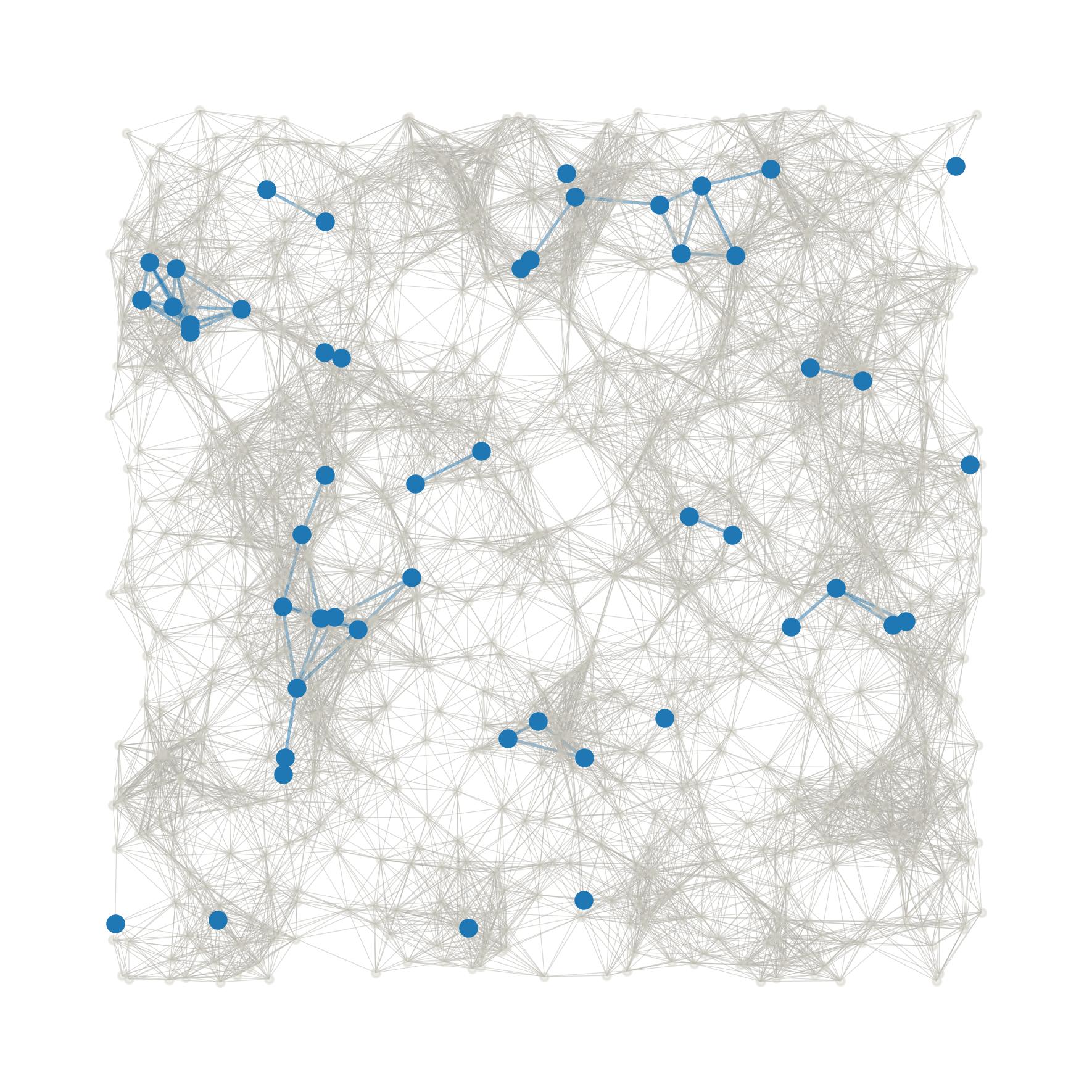}   & \includegraphics[width=0.32\linewidth]{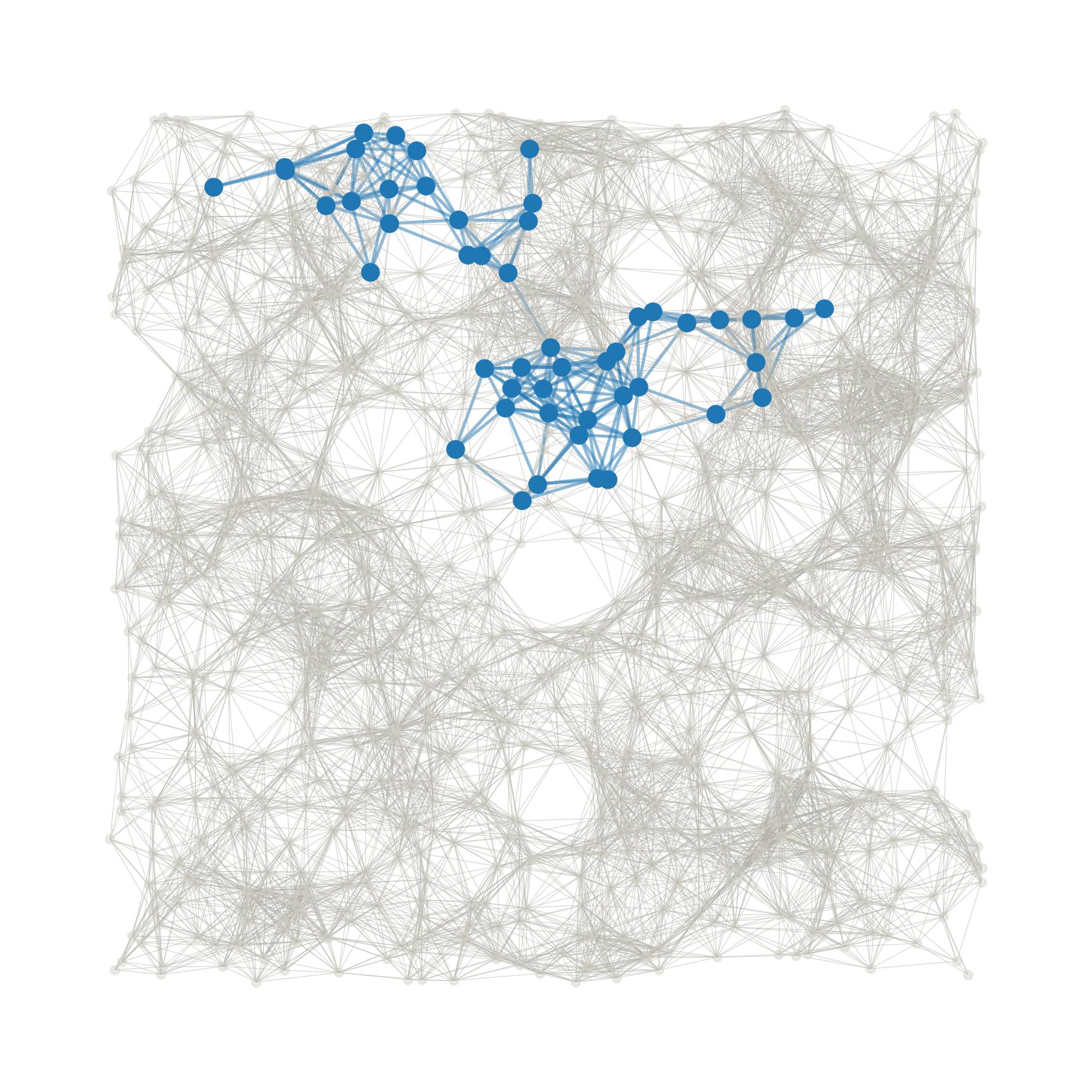} & \includegraphics[width=0.32\linewidth]{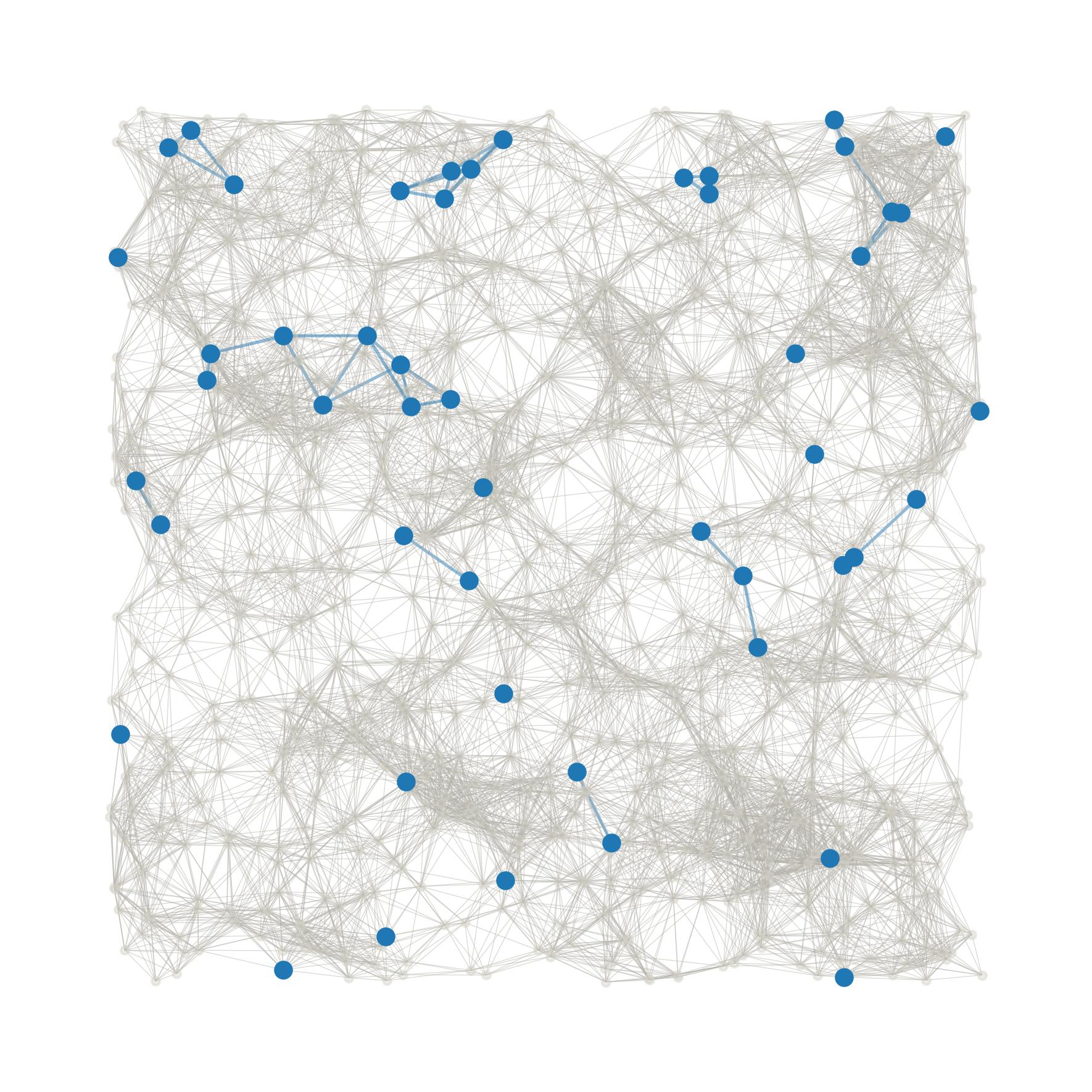}
    \end{tabular}
    \vspace{-8mm}
    \caption{Examples of various sampling schemes for $50$ nodes on random geometric graph generated under case $(ii)$. (Top row from left to right: GGCS, Random Node, Page Rank. Bottom row from left to right: Degree-based, MHRW, uniform).}
    \label{fig:RGG}
\end{figure}

\subsection{Mean Estimation of Graph Signal on Coreset}

Let us now connect the balanced cluster proportion to the signal estimation
quality, showing that proportional coverage is sufficient for good
mean estimation of band-limited functions. 

Write $L = L^{\rm block}+E$, where $L^{\rm block}$
is the block diagonal matrix encoding all intra-cluster information and $E$ captures the inter-cluster connections. 
Our goal is to bound the absolute error between the sample mean
$\hat{\mu}_S := \frac{1}{T}\sum_{i \in S} f(i)$
and the population mean $\mu := \frac{1}{n}\sum_{i \in V} f(i)$.

\begin{theorem}[Mean Estimation Error for Band-limited Signals]
\label{thm:error_bound}
Let $f \in PW_\omega(L)$ for $\omega \in (\lambda_{K-1}, \lambda_K)$,
so that $f = \sum_{k=0}^{K-1}\hat{f}_k u_k$.
Let $S$ be a coreset of size $T$ obtained from  \algorithmref{alg_omp} with per-cluster counts satisfying
$m_a/T = n_a/n + \epsilon_a$ with $|\epsilon_a| \leq \epsilon$ for all
$a \in \{1,\ldots,K\}$.
Let $\Delta = \lambda_K - \lambda_{K-1}$ denote the spectral gap of $L$.
Then:
\begin{equation}
    |\hat{\mu}_S - \mu|
    \leq \sqrt{K}\,\|f\|_2\,\left(\epsilon\sqrt{2K}+\frac{4\|E\|_2}{\Delta\cdot \sqrt{T}}\right).
    \label{eq:error_bound}
\end{equation}
The first term reflects cluster imbalance
in the coreset and the second reflects eigenvector perturbation due to
cross-cluster edges.
\end{theorem}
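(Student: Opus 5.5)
We compare $f$ with its projection onto the span of the cluster indicator vectors, and split the error into two pieces: one from the block-diagonal (idealized) structure and one from the perturbation $E$.

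\textbf{Setup.} Write $\mathbf{1}_a$ for the indicator of $V_a$ and $\chi_a=\mathbf{1}_a/\sqrt{n_a}$. These vectors form an orthonormal basis of the null space of $L^{\rm block}$, which is the bottom $K$-dimensional eigenspace of $L^{\rm block}$ (assuming each block is connected). Let $P=\sum_{a}\chi_a\chi_a^\top$ be the projector onto this space and $Q=\sum_{k<K}u_ku_k^\top$ the projector onto $PW_\omega(L)$. Since $f=Qf$, we decompose
\[
f = Pf + (Q-P)f =: g + h .
\]
Define the linear functional $\ell(x)=\frac1T\sum_{i\in S}x(i)-\frac1n\sum_{i\in V}x(i)$, so that $\hat\mu_S-\mu=\ell(g)+\ell(h)$.

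\textbf{Imbalance term.} The vector $g$ is piecewise constant: $g=\sum_a c_a\mathbf{1}_a$ with $c_a=\chi_a^\top f/\sqrt{n_a}$. Using $m_a/T=n_a/n+\epsilon_a$,
\[
\ell(g)=\sum_a c_a\Bigl(\tfrac{m_a}{T}-\tfrac{n_a}{n}\Bigr)=\sum_a c_a\epsilon_a .
\]
Hence $|\ell(g)|\le\epsilon\sum_a|c_a|\le \epsilon\sqrt K\,\bigl(\sum_a c_a^2\bigr)^{1/2}$. We need $\sum_a c_a^2\lesssim 2K\|f\|_2^2$. This follows because $|c_a|\le|\chi_a^\top f|/\sqrt{n_a}\le\|f\|_2$, giving $\sum_a c_a^2\le K\|f\|_2^2$. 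The extra factor $2$ in the target constant gives slack, for instance to absorb $Q$ versus $P$ mixing if we instead expand $f$ in the $u_k$ basis. Together these give the first term, $\sqrt K\|f\|_2\,\epsilon\sqrt{2K}$.

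\textbf{Perturbation term.} Apply the Davis--Kahan $\sin\Theta$ theorem to $L=L^{\rm block}+E$. The relevant gap separates $\{0\}$, the bottom eigenvalue of $L^{\rm block}$, from $\lambda_K(L)$. Bounding it below by $\Delta$, with a factor-of-2 loss, gives
\[
\|Q-P\|_2\le\frac{2\|E\|_2}{\Delta},\qquad\text{hence}\qquad \|h\|_2\le\frac{2\|E\|_2}{\Delta}\|f\|_2 .
\]
By Cauchy--Schwarz,
\[
\Bigl|\tfrac1T\sum_{i\in S}h(i)\Bigr|\le\frac{\|h\|_2}{\sqrt T},\qquad \Bigl|\tfrac1n\sum_{i\in V}h(i)\Bigr|\le\frac{\|h\|_2}{\sqrt n}\le\frac{\|h\|_2}{\sqrt T},
\]
since $T\le n$. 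Therefore $|\ell(h)|\le 2\|h\|_2/\sqrt T\le 4\|E\|_2\|f\|_2/(\Delta\sqrt T)$. Multiplying by $\sqrt K\ge1$ only loosens this bound, so it fits the stated form. Alternatively, the $\sqrt K$ arises naturally if we bound each $u_k$ componentwise and sum over $k$ with Cauchy--Schwarz on the coefficients $\hat f_k$. Adding the two pieces gives \eqref{eq:error_bound}.

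\textbf{Main obstacle.} The delicate step is the Davis--Kahan application. The stated gap $\Delta=\lambda_K-\lambda_{K-1}$ is a gap of $L$, not of $L^{\rm block}$. The clean choice is the one-sided version comparing the eigenspace of $L^{\rm block}$ at $0$ with the spectrum of $L$ above $\omega$, which requires $\lambda_K(L)-0\ge\Delta$. This holds because $\lambda_{K-1}\ge0$, so $\lambda_K-0\ge\lambda_K-\lambda_{K-1}=\Delta$. I would state this reduction explicitly. I expect the constant $4$ to absorb both the factor $2$ from Davis--Kahan and the factor $2$ from the two averages in $\ell(h)$.
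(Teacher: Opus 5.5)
Your proof is correct, and it takes a genuinely different route from the paper's. The paper works eigenvector by eigenvector. It writes $\hat\mu_S-\mu=\sum_{k=1}^{K-1}\hat f_k\,\delta_k(S)$ and invokes a Frobenius-norm Wedin bound $\|U_K-U_K^{\mathrm{block}}\Phi\|_F\le 2\sqrt{2K}\,\|E\|_2/\Delta$ to split each $u_k=\tilde u_k+r_k$ into a cluster-constant part and a residual. It then bounds $|\delta_k(S)|\le\epsilon\sqrt K+\|r_k\|_2/\sqrt T$ and finishes with Cauchy--Schwarz over $k$. The extra $\sqrt K$ factors in the stated bound come from the Frobenius norm and from summing the per-eigenvector imbalance bound over $k$.

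You instead split the signal itself, $f=Pf+(Q-P)f$. This means you need only the one-sided operator-norm bound $\|(I-Q)P\|_2\le\|E\|_2/\lambda_K\le\|E\|_2/\Delta$, together with the equal-rank identity $\|Q-P\|_2=\|(I-Q)P\|_2$. That bound follows from $L^{\mathrm{block}}\chi_a=0$ alone, so connectivity of the blocks is not actually needed. Your route also gives a sharper result, for three reasons:
\begin{enumerate}
\item $\sum_a c_a^2\le\sum_a n_a c_a^2=\|Pf\|_2^2\le\|f\|_2^2$;
\item the factor $2$ in your Davis--Kahan step is unnecessary;
\item $\mathbf{1}^\top h=0$, because $\mathbf{1}$ lies in the range of both $P$ and $Q$.
\end{enumerate}
So your decomposition in fact yields $|\hat\mu_S-\mu|\le\|f\|_2\bigl(\epsilon\sqrt K+\|E\|_2/(\Delta\sqrt T)\bigr)$.

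Your route also sidesteps a bookkeeping point in the per-eigenvector argument. There, $\frac1T\sum_{i\in S}\tilde u_k(i)$ equals $\sum_a\tilde c_k^{(a)}\epsilon_a$ only up to the term $\frac1n\mathbf{1}^\top\tilde u_k=-\frac1n\mathbf{1}^\top r_k$. This term need not vanish for the rotation $\Phi$ supplied by the Wedin lemma, and the paper's Step 2 does not account for it. In your version the corresponding population term is explicitly bounded, and is in fact zero.

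What the paper's route offers in return is per-frequency information: a bound on how well the coreset averages each low-frequency eigenvector $u_k$ individually.
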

 
\theoremref{thm:error_bound} shows that the coreset mean estimate is accurate provided that the sampled cluster proportions are balanced and the inter-cluster connections are sparse.

\begin{figure}[t]
    \centering
    \begin{tabular}{ccc}
      \vspace{-8mm}\includegraphics[width=0.32\linewidth]{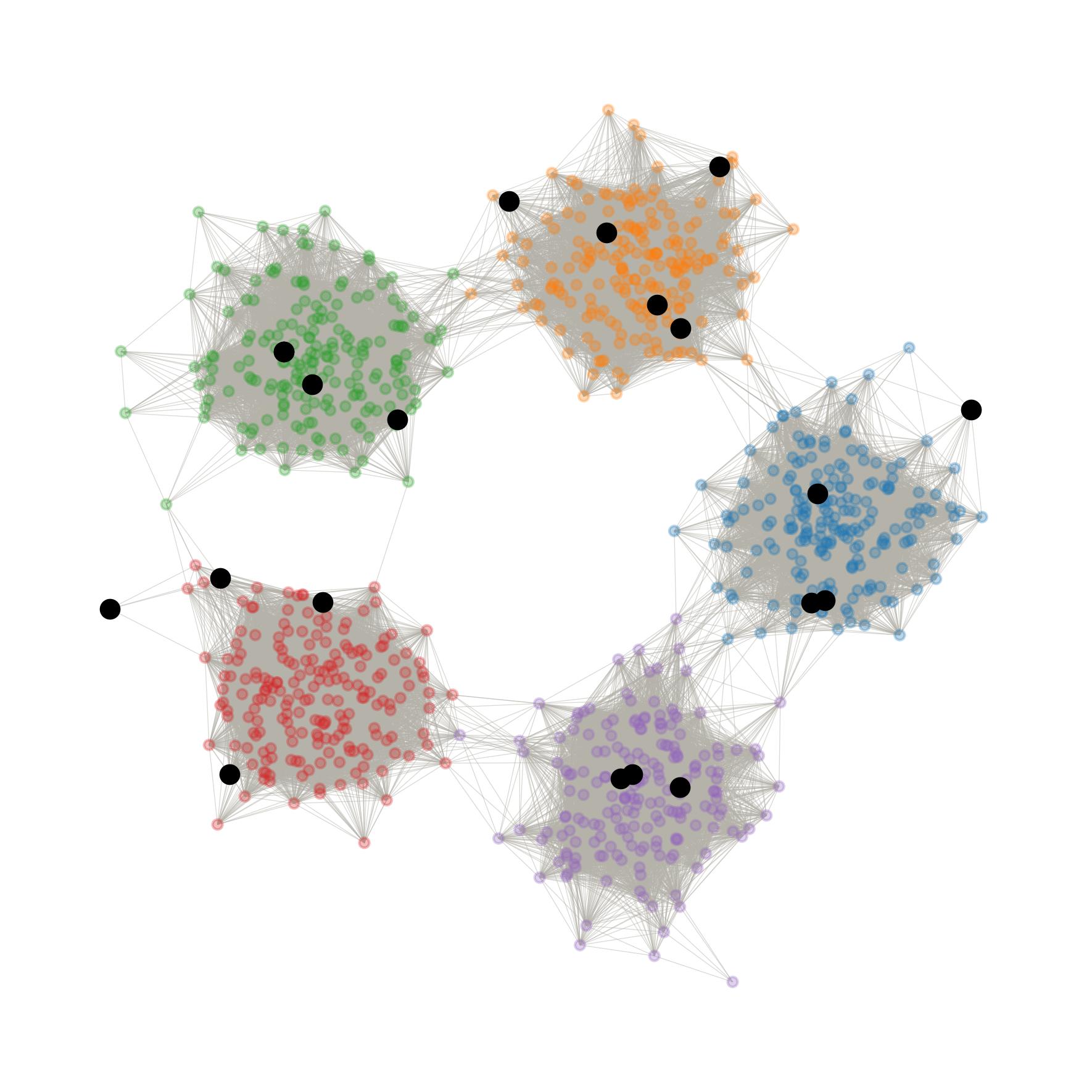}   & \includegraphics[width=0.32\linewidth]{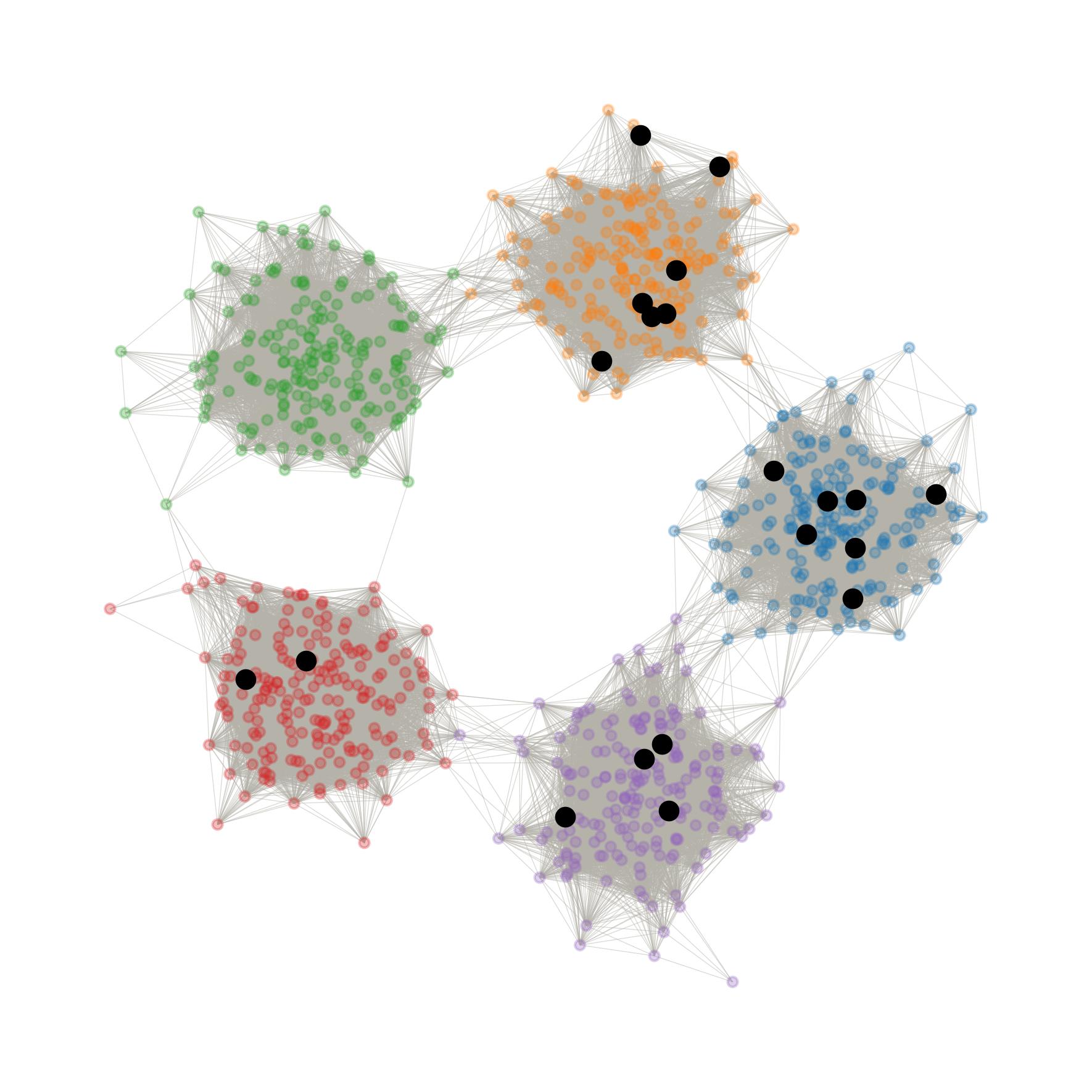} & \includegraphics[width=0.32\linewidth]{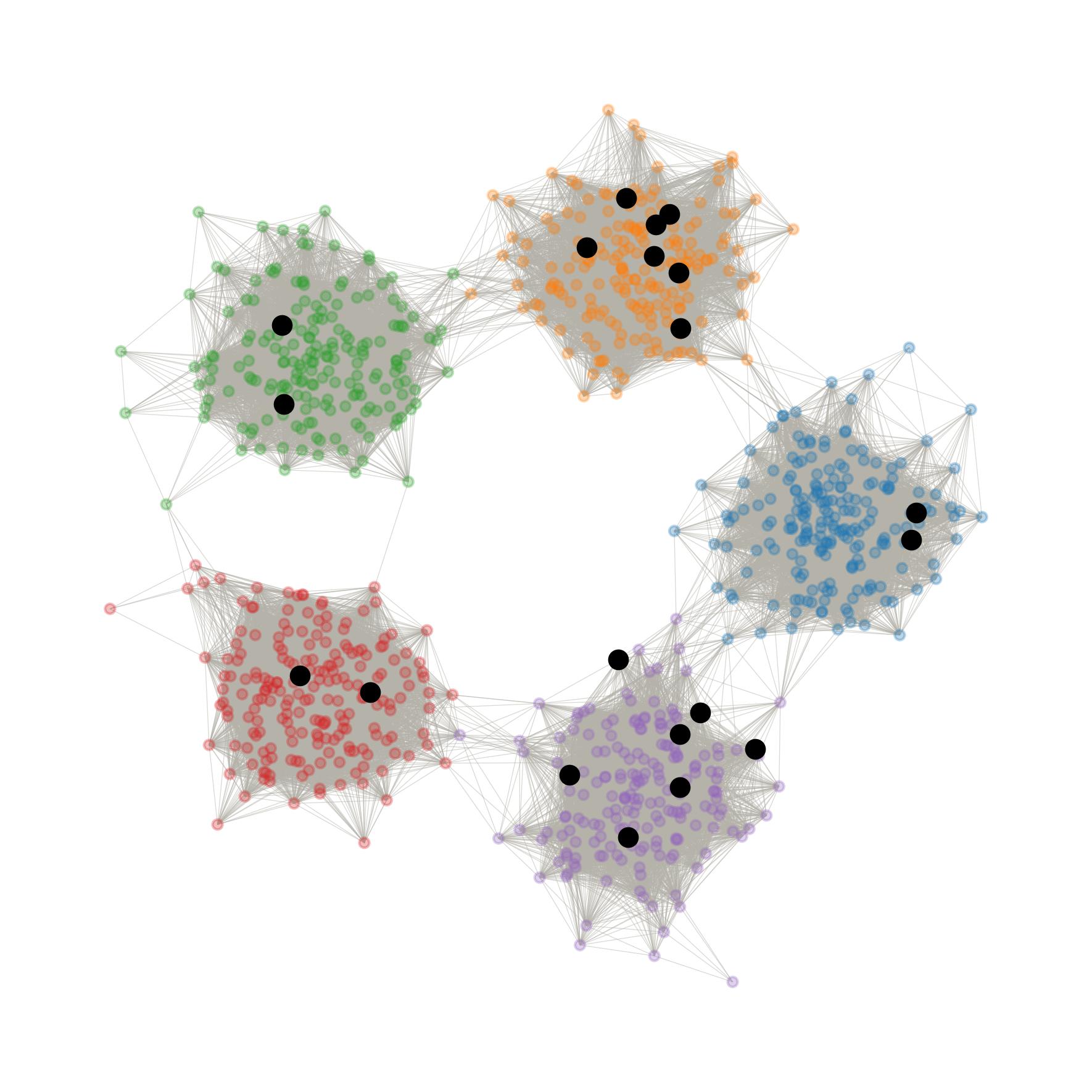} \\ 
      \includegraphics[width=0.32\linewidth]{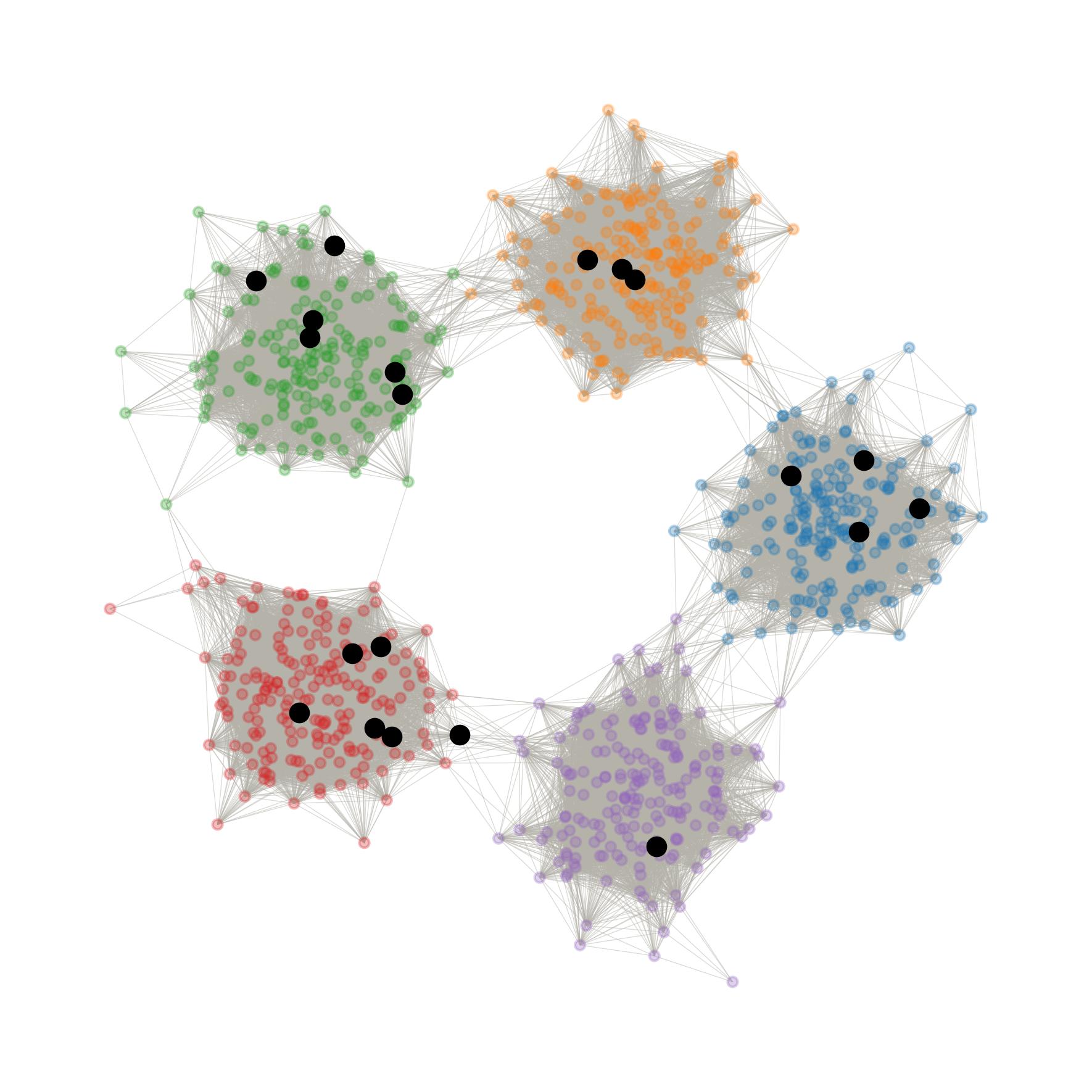}   & \includegraphics[width=0.32\linewidth]{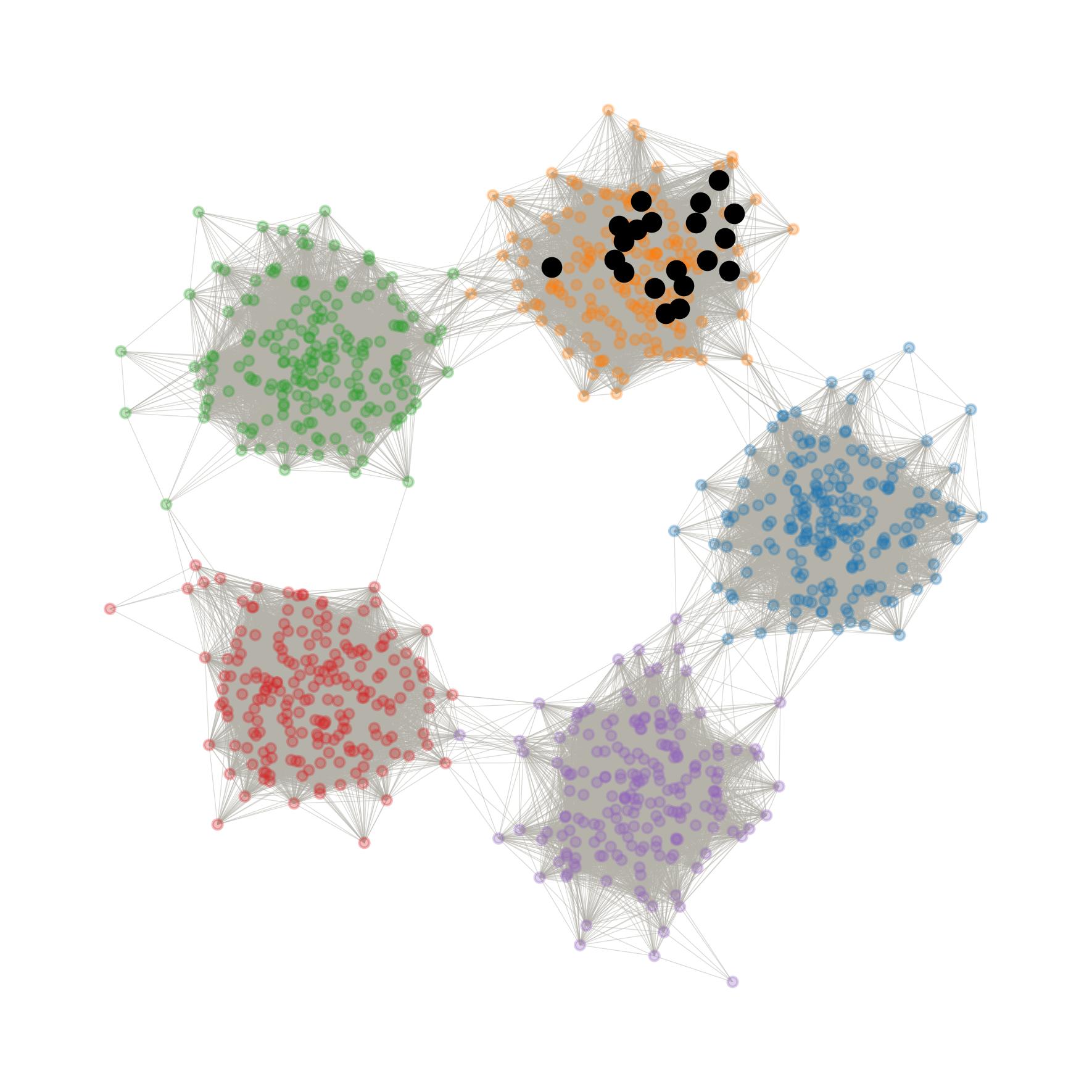} & \includegraphics[width=0.32\linewidth]{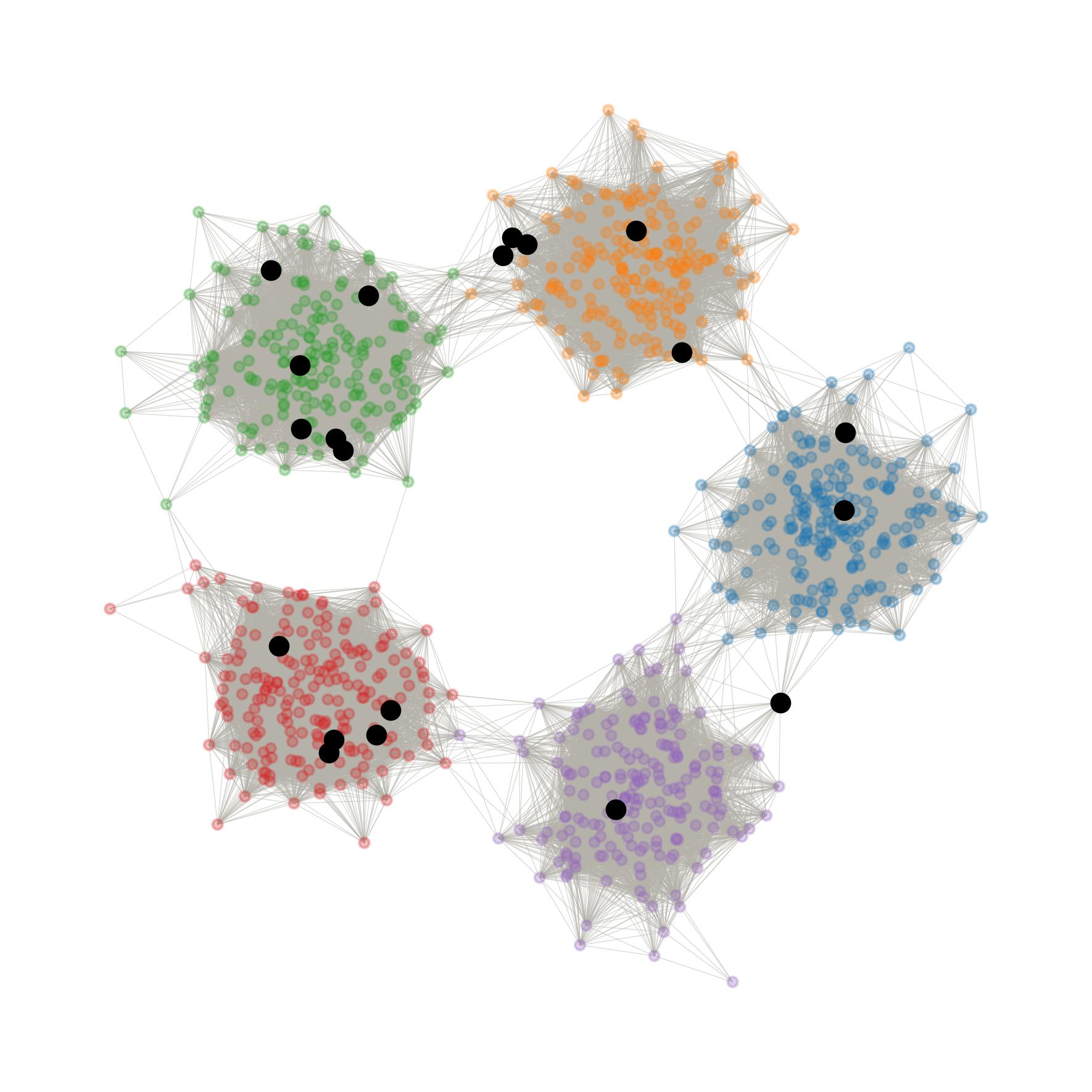}
    \end{tabular}
    \vspace{-9mm}
    \caption{Examples of various sampling schemes for $20$ nodes on random geometric graph generated under case $(iii)$. (Top row from left to right: GGCS, Random Node, Page Rank. Bottom row from left to right: Degree-based, MHRW, uniform).}
    \label{fig:RGG_cluster}
\end{figure}

\section{Experiments}
\label{sec:experiments}
 

\paragraph{Datasets.} For the synthetic dataset, we consider three types of synthetic random graphs: $(i)$ $\mathrm{SBM}(n, K, \{n_a\}_{a=1}^K, p, q)$
with $n$ nodes partitioned into $K$ clusters of sizes $n_1, \ldots, n_K$. $(ii)$ Random geometric graph in 2D, which places $n$ nodes uniformly at random in $[0,1]^2$, connects any two nodes within Euclidean distance $0.1$. $(iii)$ Random geometric graph in 2D, but with a more clean spatial clustering structure.

For the real-world datasets, we consider: $(iv)$ Cora \citep{sen2008collective}, which is a citation digraph $2708$ nodes, and contains $7$ different classes. We treat it as an undirected graph and take the largest connected component with $2485$ nodes. $(v)$  Amazon Photo \citep{shchur2018pitfalls} is a co-purchase graph derived from Amazon, consisting of $7650$ nodes spanning $8$ product categories, where edges connect items frequently bought together.

\paragraph{Benchmark Methods.}  
Deferred to \appendixref{sec:exp_details}.


 

\paragraph{Testing Functions and Evaluation Metrics.}  We evaluate the proposed coreset selection algorithm using \emph{Band-limited Function}.
A band-limited function defined on a graph is a random linear combination of the first
$K$ eigenvectors of $L$:
\begin{equation}
    f = \sum_{k=0}^{K-1} \alpha_k u_k,
    \quad \alpha_k \sim \mathcal{U}(0, 1) \text{ i.i.d.},
    \label{eq:pw_random}
\end{equation}
where $\mathcal{U}(0,1)$ denotes the uniform distribution on $[0,1]$. We multiply the function value by $\sqrt{n}$ for all cases to avoid small values of $f$. 

\paragraph{Testing Functions and Evaluation Metrics.} 
Given a sampled coreset $S$ of size $T$, we evaluate the following metrics: 
\begin{itemize}
    \item \emph{Mean absolute error}:
The primary metric is the absolute error between the sample mean
$\hat{\mu}_S = \frac{1}{T}\sum_{i \in S} f(i)$
and the population mean $\mu = \frac{1}{n}\sum_{i \in V} f(i)$:
\begin{equation}
    \epsilon(S) = \bigl|\hat{\mu}_S - \mu\bigr|.
    \label{eq:error}
\end{equation}
\item \emph{Cluster balance error}:
We report the per-cluster sampling count $m_a = |S \cap V_a|$
and compare against the proportional target $T n_a / n$.
The cluster balance error is:
\begin{equation}
    \mathcal{B}(S) = \frac{1}{K}\sum_{a=1}^{K}
    \left|\frac{m_a}{T} - \frac{n_a}{n}\right|.
    \label{eq:balance}
\end{equation}
\end{itemize}


\begin{table}[t]
\centering
\caption{Average $\epsilon(S)$ and $\mathcal{B}(S)$ on sampling $100$ nodes from the SBM graph with three clusters of
sizes $n_1 = 2000$, $n_2=1000$, $n_3 = 500$, and intra-cluster edge
probabilities $p_1, p_2, p_3 = 0.01, 0.02, 0.04$, and $q_{ij}=0.001$ for $i,j=1,2,3$.
The signal $f$ is defined in \eqref{eq:pw_random}.  }
\label{tab:mean_estimation_sbm1}
\vspace{-2mm}
\begin{tabular}{ccccccc}
\toprule
     & 
Random Node &
Page Rank &
Degree Based & 
MHRW &  Uniform & GGCS (ours) \\
\midrule
$\epsilon(S)$ & 0.061   & 0.057  & 0.087 & 0.17  & 0.081  & \textbf{0.024}  \\
$\mathcal{B}(S)$  & 0.037 & 0.032 & 0.033 & 0.10  & 0.033 & \textbf{0.030} 
\\
\bottomrule
\end{tabular}
\end{table}

\begin{table}[t]
\centering
\caption{Average results on sampling $50$ nodes from the random geometric graph with $n=1000$.
The signal $f$ is defined in \eqref{eq:pw_random}. The mean absolute error $\epsilon(S)$ is calculated under case $(ii)$, and the class imbalance error $\mathcal{B}$(S) is calculated under case $(iii)$. For the first row, the average true $|\mu|$ equals to $0.477$. }
\label{tab:mean_estimation_rgg}
\vspace{-2mm}
\begin{tabular}{ccccccc}
\toprule
     & 
Random Node &
Page Rank &
Degree Based & 
MHRW &  Uniform & GGCS (ours) \\
\midrule
$\epsilon(S)$ &  0.084 & 0.086 & 0.084 & 0.50 & 0.071 & \textbf{0.067} \\
$\mathcal{B}(S)$ & 0.042  & 0.043 & 0.045 & 0.30 & 0.047 & \textbf{0.038}
\\
\bottomrule
\end{tabular}
\end{table}

\subsection{Results}
For synthetic data, under $(i)$, we summarize the results for different sampling schemes in \tableref{tab:mean_estimation_sbm1} and \tableref{tab:mean_estimation_sbm2}. Under $(ii)$ and $(iii)$, we summarize the results in \tableref{tab:mean_estimation_rgg}, and we also illustrate different sampling schemes in \figureref{fig:RGG} and \figureref{fig:RGG_cluster}. \figureref{fig:RGG} shows that nodes selected by competing methods tend to cluster together or form elongated paths, whereas those selected by our method are more evenly distributed across the graph. \figureref{fig:RGG_cluster} further confirms that our method achieves more balanced selection across clusters compared to the baselines. 

For real-world data, we summarize the class balance errors for different sampling sizes in \tableref{tab:mean_estimation_real1} and \tableref{tab:mean_estimation_real2}. We compute the cluster balance error using the true class labels, regardless of the nodes' spatial locations. Under both $(iv)$ and $(v)$, our method consistently achieves lower error across all sampling sizes. It is worth noting that $\mathcal{B}(S)$ increases with the size of $S$ by definition in \equationref{eq:balance}: for a fixed graph, the denominators $n_a$ and $n$ are constants, so a larger sampling size $T$ naturally yields a larger error.  The code to reproduce our experimental results is available at \url{https://github.com/zzzzms/GreedyGraphCoresetSelection}.

\begin{table}[t]
\centering
\caption{Average Cluster Balance Error $\mathcal{B}(S)$ on Cora with different sampling sizes $S$. }
\label{tab:mean_estimation_real1}
\vspace{2mm}
\begin{tabular}{lcccccc}
\toprule
     & 
Random Node &
Page Rank &
Degree Based & 
MHRW &  Uniform & GGCS (ours) \\
\midrule
$|S|=20$ & 0.058 & 0.063 & 0.059 & 0.180 & 0.060 & \textbf{0.056} 
\\
$|S|=30$ & 0.049 & 0.049  & 0.049 & 0.173 & 0.051 & \textbf{0.047} \\
$|S|=50$ & 0.038 & 0.038 & 0.040 & 0.157 & 0.038 & \textbf{0.036} \\
$|S|=100$ & 0.027 & 0.026  & 0.028 & 0.122 & 0.027 & \textbf{0.024}  \\
$|S|=300$ & 0.014 & 0.015 & 0.016  & 0.073 & 0.015  & \textbf{0.012}  \\
\bottomrule
\end{tabular}
\end{table}


\section{Conclusion and Future Work}
\label{sec:conclusion}
We presented a scalable graph coreset algorithm based on minimum inner
product selection over random column subsets of the Laplacian, requiring
only linear memory in the number of nodes and never accessing the full
graph. Under the stochastic block model with balanced degree condition, we proved that the algorithm achieves sampling proportional to
cluster size without knowledge of cluster labels, and that this proportional
coverage guarantees the error for mean estimation for any band-limited graph
signal in the Paley-Wiener space defined by the spectral gap of the
Laplacian, with the error decaying as inter-cluster connectivity
weakens. 
Several future research directions are worth pursuing: extending the theoretical analysis to incorporate the squared Laplacian and relaxing the balanced degree assumption; generalizing the framework to weighted or degree-corrected graphs; and adapting the algorithm to dynamic graphs in a streaming setting.

\acks{The authors would like to thank the anonymous reviewers for their valuable feedback and constructive suggestions that improved the quality of the paper.}

\bibliography{references}

@article{ortega2018graph,
  title={Graph signal processing: Overview, challenges, and applications},
  author={Ortega, Antonio and Frossard, Pascal and Kova{\v{c}}evi{\'c}, Jelena and Moura, Jos{\'e} MF and Vandergheynst, Pierre},
  journal={Proceedings of the IEEE},
  volume={106},
  number={5},
  pages={808--828},
  year={2018},
  publisher={IEEE}
}

@article{mateos2019connecting,
  title={Connecting the dots: Identifying network structure via graph signal processing},
  author={Mateos, Gonzalo and Segarra, Santiago and Marques, Antonio G and Ribeiro, Alejandro},
  journal={IEEE Signal Processing Magazine},
  volume={36},
  number={3},
  pages={16--43},
  year={2019},
  publisher={IEEE}
}

@article{newman2002random,
  title={Random graph models of social networks},
  author={Newman, Mark EJ and Watts, Duncan J and Strogatz, Steven H},
  journal={Proceedings of the National Academy of Sciences},
  volume={99},
  number={suppl\_1},
  pages={2566--2572},
  year={2002},
  publisher={National Academy of Sciences}
}

@inproceedings{myers2014information,
  title={Information network or social network? {T}he structure of the {T}witter follow graph},
  author={Myers, Seth A and Sharma, Aneesh and Gupta, Pankaj and Lin, Jimmy},
  booktitle={Proceedings of the 23rd International Conference on World Wide Web},
  pages={493--498},
  year={2014}
}

@article{pavlopoulos2011using,
  title={Using graph theory to analyze biological networks},
  author={Pavlopoulos, Georgios A and Secrier, Maria and Moschopoulos, Charalampos N and Soldatos, Theodoros G and Kossida, Sophia and Aerts, Jan and Schneider, Reinhard and Bagos, Pantelis G},
  journal={BioData Mining},
  volume={4},
  number={1},
  pages={10},
  year={2011},
  publisher={BioMed Central}
}

@article{aittokallio2006graph,
  title={Graph-based methods for analysing networks in cell biology},
  author={Aittokallio, Tero and Schwikowski, Benno},
  journal={Briefings in Bioinformatics},
  volume={7},
  number={3},
  pages={243--255},
  year={2006},
  publisher={Oxford University Press}
}

@book{lippmann2017public,
  title={Public Opinion},
  author={Lippmann, Walter},
  year={2017},
  publisher={Routledge}
}

@inproceedings{yu2005real,
  title={Real-time forest fire detection with wireless sensor networks},
  author={Yu, Liyang and Wang, Neng and Meng, Xiaoqiao},
  booktitle={Proceedings of the 2005 International Conference on Wireless Communications, Networking and Mobile Computing},
  volume={2},
  pages={1214--1217},
  year={2005},
  publisher={IEEE}
}

@inproceedings{choi2017gram,
  title={{GRAM}: Graph-based attention model for healthcare representation learning},
  author={Choi, Edward and Bahadori, Mohammad Taha and Song, Le and Stewart, Walter F and Sun, Jimeng},
  booktitle={Proceedings of the 23rd ACM SIGKDD International Conference on Knowledge Discovery and Data Mining},
  pages={787--795},
  year={2017}
}

@inproceedings{anis2014towards,
  title={Towards a sampling theorem for signals on arbitrary graphs},
  author={Anis, Aamir and Gadde, Akshay and Ortega, Antonio},
  booktitle={Proceedings of the 2014 IEEE International Conference on Acoustics, Speech and Signal Processing (ICASSP)},
  pages={3864--3868},
  year={2014},
  publisher={IEEE}
}

@inproceedings{narang2013signal,
  title={Signal processing techniques for interpolation in graph structured data},
  author={Narang, Sunil K and Gadde, Akshay and Ortega, Antonio},
  booktitle={Proceedings of the 2013 IEEE International Conference on Acoustics, Speech and Signal Processing (ICASSP)},
  pages={5445--5449},
  year={2013},
  publisher={IEEE}
}

@article{marques2015sampling,
  title={Sampling of graph signals with successive local aggregations},
  author={Marques, Antonio G and Segarra, Santiago and Leus, Geert and Ribeiro, Alejandro},
  journal={IEEE Transactions on Signal Processing},
  volume={64},
  number={7},
  pages={1832--1843},
  year={2015},
  publisher={IEEE}
}

@article{ding2024spectral,
  title={Spectral greedy coresets for graph neural networks},
  author={Ding, Mucong and He, Yinhan and Li, Jundong and Huang, Furong},
  journal={arXiv preprint arXiv:2405.17404},
  year={2024}
}

@inproceedings{mirzasoleiman2020coresets,
  title={Coresets for data-efficient training of machine learning models},
  author={Mirzasoleiman, Baharan and Bilmes, Jeff and Leskovec, Jure},
  booktitle={Proceedings of the 37th International Conference on Machine Learning (ICML)},
  pages={6950--6960},
  year={2020},
  publisher={PMLR}
}

@article{sakiyama2019eigendecomposition,
  title={Eigendecomposition-free sampling set selection for graph signals},
  author={Sakiyama, Akie and Tanaka, Yuichi and Tanaka, Toshihisa and Ortega, Antonio},
  journal={IEEE Transactions on Signal Processing},
  volume={67},
  number={10},
  pages={2679--2692},
  year={2019},
  publisher={IEEE}
}

@article{cloninger2021low,
  title={A low discrepancy sequence on graphs},
  author={Cloninger, Alex and Mhaskar, Hrushikesh N},
  journal={Journal of Fourier Analysis and Applications},
  volume={27},
  number={5},
  pages={76},
  year={2021},
  publisher={Springer}
}

@inproceedings{vahidian2020coresets,
  title={Coresets for estimating means and mean square error with limited greedy samples},
  author={Vahidian, Saeed and Mirzasoleiman, Baharan and Cloninger, Alexander},
  booktitle={Proceedings of the 36th Conference on Uncertainty in Artificial Intelligence (UAI)},
  pages={350--359},
  year={2020},
  publisher={PMLR}
}

@article{puy2018random,
  title={Random sampling of bandlimited signals on graphs},
  author={Puy, Gilles and Tremblay, Nicolas and Gribonval, R{\'e}mi and Vandergheynst, Pierre},
  journal={Applied and Computational Harmonic Analysis},
  volume={44},
  number={2},
  pages={446--475},
  year={2018},
  publisher={Elsevier}
}

@article{perraudin2018global,
  title={Global and local uncertainty principles for signals on graphs},
  author={Perraudin, Nathanael and Ricaud, Benjamin and Shuman, David I and Vandergheynst, Pierre},
  journal={APSIPA Transactions on Signal and Information Processing},
  volume={7},
  number={1},
  pages={1--26},
  year={2018},
  publisher={Cambridge University Press}
}

@inproceedings{campbell2018bayesian,
  title={Bayesian coreset construction via greedy iterative geodesic ascent},
  author={Campbell, Trevor and Broderick, Tamara},
  booktitle={Proceedings of the 35th International Conference on Machine Learning (ICML)},
  pages={698--706},
  year={2018},
  publisher={PMLR}
}

@inproceedings{balcilar2021analyzing,
  title={Analyzing the expressive power of graph neural networks in a spectral perspective},
  author={Balcilar, Muhammet and Renton, Guillaume and H{\'e}roux, Pierre and Ga{\"u}z{\`e}re, Benoit and Adam, S{\'e}bastien and Honeine, Paul},
  booktitle={Proceedings of the 9th International Conference on Learning Representations (ICLR)},
  year={2021}
}

@article{davis1994adaptive,
  title={Adaptive time-frequency decompositions},
  author={Davis, Geoffrey M and Mallat, Stephane G and Zhang, Zhifeng},
  journal={Optical Engineering},
  volume={33},
  number={7},
  pages={2183--2191},
  year={1994},
  publisher={SPIE}
}

@inproceedings{pati1993orthogonal,
  title={Orthogonal matching pursuit: Recursive function approximation with applications to wavelet decomposition},
  author={Pati, Yagyensh Chandra and Rezaiifar, Ramin and Krishnaprasad, Perinkulam Sambamurthy},
  booktitle={Proceedings of the 27th Asilomar Conference on Signals, Systems and Computers},
  pages={40--44},
  year={1993},
  publisher={IEEE}
}

@inproceedings{shen2023graph,
  title={Graph-based Semi-supervised Local Clustering with Few Labeled Nodes},
  author={Shen, Zhaiming and Lai, Ming-Jun and Li, Sheng},
  booktitle={Proceedings of the 32nd International Joint Conference on Artificial Intelligence (IJCAI)},
  year={2023}
}

@inproceedings{shen2025advancing,
  title={Advancing Local Clustering on Graphs via Compressive Sensing: Semi-supervised and Unsupervised Methods},
  author={Shen, Zhaiming and Kang, Sung Ha},
  booktitle={Proceedings of the 1st Conference on Topology, Algebra, and Geometry in Data Science (TAG-DS 2025)},
  pages={126--146},
  year={2025},
  volume={321},
  publisher={PMLR}
}

@article{lai2023compressed,
  title={A compressed sensing based least squares approach to semi-supervised local cluster extraction},
  author={Lai, Ming-Jun and Shen, Zhaiming},
  journal={Journal of Scientific Computing},
  volume={94},
  number={3},
  pages={63},
  year={2023},
  publisher={Springer}
}

@article{sen2008collective,
  title={Collective classification in network data},
  author={Sen, Prithviraj and Namata, Galileo and Bilgic, Mustafa and Getoor, Lise and Galligher, Brian and Eliassi-Rad, Tina},
  journal={AI Magazine},
  volume={29},
  number={3},
  pages={93--106},
  year={2008},
  publisher={Association for the Advancement of Artificial Intelligence}
}

@article{stumpf2005subnets,
  title={Subnets of scale-free networks are not scale-free: sampling properties of networks},
  author={Stumpf, Michael PH and Wiuf, Carsten and May, Robert M},
  journal={Proceedings of the National Academy of Sciences},
  volume={102},
  number={12},
  pages={4221--4224},
  year={2005},
  publisher={National Academy of Sciences}
}

@inproceedings{leskovec2006sampling,
  title={Sampling from large graphs},
  author={Leskovec, Jure and Faloutsos, Christos},
  booktitle={Proceedings of the 12th ACM SIGKDD International Conference on Knowledge Discovery and Data Mining},
  pages={631--636},
  year={2006},
  publisher={ACM}
}

@article{adamic2001search,
  title={Search in power-law networks},
  author={Adamic, Lada A and Lukose, Rajan M and Puniyani, Amit R and Huberman, Bernardo A},
  journal={Physical Review E},
  volume={64},
  number={4},
  pages={046135},
  year={2001},
  publisher={American Physical Society}
}

@inproceedings{hubler2008metropolis,
  title={Metropolis algorithms for representative subgraph sampling},
  author={H{\"u}bler, Christian and Kriegel, Hans-Peter and Borgwardt, Karsten and Ghahramani, Zoubin},
  booktitle={Proceedings of the 2008 IEEE International Conference on Data Mining (ICDM)},
  pages={283--292},
  year={2008},
  publisher={IEEE}
}

@inproceedings{stutzbach2006unbiased,
  title={On unbiased sampling for unstructured peer-to-peer networks},
  author={Stutzbach, Daniel and Rejaie, Reza and Duffield, Nick and Sen, Subhabrata and Willinger, Walter},
  booktitle={Proceedings of the 6th ACM SIGCOMM Conference on Internet Measurement},
  pages={27--40},
  year={2006},
  publisher={ACM}
}

@inproceedings{rozemberczki2020little,
  title={{Little Ball of Fur}: A {Python} library for graph sampling},
  author={Rozemberczki, Benedek and Kiss, Oliver and Sarkar, Rik},
  booktitle={Proceedings of the 29th ACM International Conference on Information and Knowledge Management (CIKM)},
  pages={3133--3140},
  year={2020},
  publisher={ACM}
}

@article{shchur2018pitfalls,
  author  = {Shchur, Oleksandr and Mumme, Maximilian and Bojchevski, Aleksandar and G{\"u}nnemann, Stephan},
  title   = {Pitfalls of Graph Neural Network Evaluation},
  journal = {arXiv preprint arXiv:1811.05868},
  year    = {2018}
}

@article{chen2018network,
  author  = {Chen, Kehui and Lei, Jing},
  title   = {Network Cross-Validation for Determining the Number of Communities in Network Data},
  journal = {Journal of the American Statistical Association},
  volume  = {113},
  number  = {521},
  pages   = {241--251},
  year    = {2018},
  publisher = {Taylor \& Francis}
}

@article{lai2020compressive,
  title={Compressive sensing for cut improvement and local clustering},
  author={Lai, Ming-Jun and Mckenzie, Daniel},
  journal={SIAM Journal on Mathematics of Data Science},
  volume={2},
  number={2},
  pages={368--395},
  year={2020},
  publisher={SIAM}
}

\appendix

\section{Model Assumptions} \label{sec:assump}
 
We model the graph $G$ as a draw from the Stochastic Block Model with parameters
$n$, $K$, $\{n_a\}_{a=1}^K$, $\{p_a\}_{a=1}^K$,
$\{q_{ab}\}_{a,b=1,a\neq b}^{K}$. Our theoretical analysis requires the following assumptions.
 
\begin{assumption}[Stochastic Block Model]
\label{ass:sbm}
The node set $V$ is partitioned into $K$ disjoint clusters
$V_1, \ldots, V_K$ of sizes $n_1, \ldots, n_K$ with
$\sum_{a=1}^K n_a = n$. Edges are drawn independently with $\mathbb{P}[(i,j) \in E]=p_a$ if $i, j \in V_a$, and $\mathbb{P}[(i,j) \in E]=q_{ab}$ if $i \in V_a,\, j \in V_b,\, a \neq b$.
\end{assumption}

 
\begin{assumption}[Sparse Inter-Cluster Connectivity]
\label{ass:sparse}
The inter-cluster edge probabilities satisfy
$q_{\max} = o(p_{\min})$, where $q_{\max}:=\max_{a,b=1,\cdots,K}\{q_{ab}\}$ and $p_{\min}:=\min_{a=1,\cdots,K}\{p_{a}\}$.

\end{assumption}
 
\begin{assumption}[Balanced Intra-Cluster Degree]
\label{ass:balanced}
There exists a sequence $\omega_n \to \infty$  such that the intra-cluster edge
probabilities satisfy:
\begin{equation}
    n_a p_a = \omega_n \qquad \forall\, a \in \{1, \ldots, K\},
    \label{eq:balanced}
\end{equation}
so $p_a = \omega_n/n_a$.
This keeps the expected intra-cluster degree equal across all clusters
while allowing $\omega_n$ to grow with $n$.
Typical choices include $\omega_n = \Theta(\log n)$ (sparse regime)
or $\omega_n = \Theta(\sqrt{n})$ (moderately dense regime).
\end{assumption}

\section{Deferred Proofs and Other Useful Lemmas}\label{sec:proofs}

In the subsequent subsections, we first establish an exact expression
for the Laplacian column inner products
(\lemmaref{lem:l2_exact}), then characterize the result for balanced cluster proportion (\theoremref{thm:residual}), and finally prove
the error bound for the mean value estimation of band-limited functions defined on a graph using the selected coreset (\theoremref{thm:error_bound}).

\subsection{Proof of \lemmaref{lem:l2_exact}}

\begin{proof}[Proof of \lemmaref{lem:l2_exact}]
\equationref{eq:l2_exact2}:
Split $\sum_k L_{ik}L_{kj}$ into $k=i$, $k=j$, and $k\neq i,j$,
giving $-d_i\mathbf{1}[(i,j)\in E]$,
$-d_j\mathbf{1}[(i,j)\in E]$, and
$\sum_{k\neq i,j}\mathbf{1}[(i,k)\in E]\mathbf{1}[(j,k)\in E]
= |\mathcal{N}(i)\cap\mathcal{N}(j)|$ respectively.
 
\equationref{eq:l2_simplified} Case 1 ($i,j\in V_a$):
Each of the $n_a-2$ within-cluster nodes contributes $p_a^2$ and each
of the $n_c$ cross-cluster nodes contributes $q_{ac}^2$ to
$\mathbb{E}[|\mathcal{N}(i)\cap\mathcal{N}(j)|]$, giving
$(n_a-2)p_a^2 + \sum_{c\neq a}n_cq_{ac}^2$.
The edge correction is
$p_a\cdot 2((n_a-1)p_a + \sum_{c\neq a}n_cq_{ac})$.
Subtracting and collecting the leading terms:
$(n_a-2)p_a^2 - 2(n_a-1)p_a^2 = -n_ap_a^2 + O(p_a^2)$,
with the cross-cluster residual $O(np_aq_{\max})$,
giving the first case of~\equationref{eq:l2_simplified}.
 
\equationref{eq:l2_simplified} Case 2 ($i\in V_a$, $j\in V_b$, $a\neq b$):
The common neighbor sum splits as
$(n_a-1)p_aq_{ab} + (n_b-1)q_{ab}p_b
+ \sum_{c\neq a,b}n_cq_{ac}q_{bc}$.
The degree correction is
$q_{ab}((n_a-1)p_a + \sum_{c\neq a}n_cq_{ac}
+(n_b-1)p_b+\sum_{c\neq b}n_cq_{bc})$.
The $O(\omega_n q_{ab})$ terms from $c=a$ and $c=b$ cancel
exactly, leaving $\sum_{c\neq a,b}n_cq_{ac}q_{bc}
+ O(nq_{\max}^2) = O(nq_{\max}^2)$,
and $O(nq_{\max}^2) = o(n_{\min}p_{\min}^2)$
since $q_{\max} = o(p_{\min})$.
\end{proof}

\subsection{Proof of \theoremref{thm:residual}}

\begin{proof}[Proof of \theoremref{thm:residual}]
Since $L_{:,j}^\top\mathbf{b}_t = -\sum_{s=1}^{t}(L^2)_{ji_s}$
and $\mathcal{X}_t$ is deterministic, taking conditional expectation
over $G$ and splitting by cluster:
\begin{equation*}
    \mathbb{E}\!\left[L_{:,j}^\top\mathbf{b}_t\mid\mathcal{X}_t\right]
    = -\sum_{i_s\in V_a}\mathbb{E}[(L^2)_{ji_s}]
    - \sum_{c\neq a}\sum_{i_s\in V_c}\mathbb{E}[(L^2)_{ji_s}].
\end{equation*}
Substituting $\mathbb{E}[(L^2)_{ji_s}] = -n_ap_a^2 + O(np_aq_{\max})$
for $i_s\in V_a$ and $\mathbb{E}[(L^2)_{ji_s}] = O(nq_{\max}^2)$
for $i_s\in V_c$, $c\neq a$, from~\eqref{eq:l2_simplified}
gives~\eqref{eq:residual_exact}--\eqref{eq:eta_bound}.

Furthermore, suppose $\frac{m_a^{(t)}}{n_a}$ is the smallest among all $a=1,\cdots,K$ up to tolerance $|\eta_a^{(t)}|$, i.e.,
\begin{equation*}
    \frac{m_a^{(t)}}{n_a}+\frac{|\eta_a^{(t)}|}{\omega_n^2}< \frac{m_b^{(t)}}{n_b}-\frac{|\eta_b^{(t)}|}{\omega_n^2}.
\end{equation*}
Multiplying the above equation both sides by $\omega_n^2 = n_a^2p_a^2=n_b^2p_b^2$, gives
\begin{equation*}
    \mathbb{E}\!\left[L_{:,j_a}^\top \mathbf{b}_t
    \,\middle|\, \mathcal{X}_t\right]<\mathbb{E}\!\left[L_{:,j_b}^\top \mathbf{b}_t
    \,\middle|\, \mathcal{X}_t\right]
\end{equation*}
for any $j_a\in V_a$ and $j_b\in V_b$. Therefore, \algorithmref{alg_omp} selects a node from $V_a$ in the $(t+1)$-th iteration.
\end{proof}

\subsection{Proof of \theoremref{thm:error_bound}}

\begin{proof}[Proof of \theoremref{thm:error_bound}]
Let us define $\delta_k(S):=\frac{1}{T}\sum_{i\in S}u_k(i)-\frac{1}{n}\sum_{i\in V}u_k(i)$.
Since $f \in PW_\omega(L)$, $\hat{f}_k = 0$ for $k \geq K$, so the
estimation error reduces to:
\begin{equation}
    \hat{\mu}_S - \mu
    = \sum_{k=0}^{K-1}\hat{f}_k\,\delta_k(S)=\sum_{k=1}^{K-1}\hat{f}_k\,\delta_k(S).
    \label{eq:error_expand}
\end{equation}
Notice that $\delta_0(S)=0$, as $u_0$ is a constant vector across all nodes. Also, notice that for $k\geq 1$, $\delta_k(S) = \frac{1}{T}\sum_{i\in S}u_k(i)$ as
$\frac{1}{n}\sum_{i\in V}u_k(i) = 0$.
 
\paragraph{Step 1: Decompose $u_k$ via Wedin \textbf{sin}${\Theta}$ Lemma.}
Write each eigenvector as:
\begin{equation}
    u_k = \tilde{u}_k + r_k,
    \label{eq:dk_decomp}
\end{equation}
where $\tilde{u}_k = \sum_{j=1}^K \Phi_{jk} u_j^{\mathrm{block}}$ is
the rotated block-diagonal approximation (exactly cluster-constant
within each cluster) and $r_k = u_k - \tilde{u}_k$ is the perturbation
residual. By Lemma~\ref{lem:davis_kahan}:
\begin{equation}
    \sum_{k=0}^{K-1}\|r_k\|_2^2
    = \|U_K - U_K^{\mathrm{block}}\Phi\|_F^2
    \leq \frac{8K\,\|E\|_2^2}{\Delta^2}.
    \label{eq:residual_bound}
\end{equation}
 
\paragraph{Step 2: Bound the discrepancy $\delta_k(S)$.}
Substituting~\eqref{eq:dk_decomp} into $\delta_k(S)$:
\begin{align}
    \delta_k(S)
    &= \frac{1}{T}\sum_{i\in S}\tilde{u}_k(i)
     + \frac{1}{T}\sum_{i\in S}r_k(i)
    \triangleq \tilde{\delta}_k(S) + \rho_k(S).
    \label{eq:delta_decomp}
\end{align}
 
For $\tilde{\delta}_k(S)$: since $\tilde{u}_k$ is exactly
cluster-constant with value $\tilde{c}_k^{(a)}$ for $i\in V_a$:
\begin{equation*}
    |\tilde{\delta}_k(S)|
    = \left|\sum_{a=1}^K \tilde{c}_k^{(a)}\epsilon_a\right|
    \leq \epsilon\sum_{a=1}^K|\tilde{c}_k^{(a)}|
    \leq \epsilon\sqrt{K}\,\|\tilde{u}_k\|_\infty
    \leq \epsilon\sqrt{K},
\end{equation*}
where the last step uses $\|\tilde{u}_k\|_\infty \leq
\|\tilde{u}_k\|_2 = 1$.
 
For $\rho_k(S)$: by the Cauchy-Schwarz inequality and
$|S\cap V_a| \leq T$:
\begin{equation*}
    |\rho_k(S)|
    = \left|\frac{1}{T}\sum_{i\in S}r_k(i)\right|
    \leq \frac{1}{T}\sum_{i\in S}|r_k(i)|
    \leq \frac{\sqrt{T}\,\|r_k\|_2}{T}
    = \frac{\|r_k\|_2}{\sqrt{T}},
\end{equation*}
Therefore, we get
\begin{equation}
    |\delta_k(S)|
    \leq \epsilon\sqrt{K}
       + \frac{\|r_k\|_2}{\sqrt{T}}.
    \label{eq:delta_bound}
\end{equation}
 
\paragraph{Step 3: Combine via Cauchy-Schwarz.}
Applying Cauchy-Schwarz to~\eqref{eq:error_expand}:
\begin{align*}
    |\hat{\mu}_S - \mu|
    &\leq \left(\sum_{k=1}^{K-1}\hat{f}_k^2\right)^{1/2}
          \left(\sum_{k=1}^{K-1}\delta_k(S)^2\right)^{1/2} \\
    &\leq \sqrt{2K}\,\|f\|_2\,
          \sqrt{\left(\epsilon^2(K-1) + \frac{8\|E\|^2_2}
          {\Delta^2 \cdot T}\right)} \\
          &\leq \sqrt{K}\,\|f\|_2\,\left(\epsilon\sqrt{2K}+\frac{4\|E\|_2}{\Delta\cdot \sqrt{T}}\right),
\end{align*}
which gives~\eqref{eq:error_bound}.
\end{proof}

\subsection{Wedin \textbf{sin}${\Theta}$ Lemma}
\begin{lemma}[Wedin \textbf{sin}${\Theta}$ Lemma]
\label{lem:davis_kahan}
Let $L = L_{\mathrm{block}} + E$ where $L_{\mathrm{block}}$ is the
block diagonal Laplacian under perfect cluster separation ($q=0$) and
$E$ contains the cross-cluster edge contributions. Let $U_K$ and
$U_K^{\mathrm{block}}$ be the matrices of the first $K$ eigenvectors of
$L$ and $L_{\mathrm{block}}$ respectively, and let
$\Delta = \lambda_K - \lambda_{K-1}$ be the spectral gap of $L$.
Then there exists an orthogonal matrix $\Phi \in \mathbb{R}^{K\times K}$
such that:
\begin{equation}
    \|U_K - U_K^{\mathrm{block}}\Phi\|_F
    \leq \frac{2\sqrt{2K}\,\|E\|_2}{\Delta}.
    \label{eq:dk_bound}
\end{equation}
\end{lemma}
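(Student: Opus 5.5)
The bound in Lemma~\ref{lem:davis_kahan} will come from the Davis--Kahan/Wedin $\sin\Theta$ theorem, followed by the standard conversion from the $\sin\Theta$ distance to the Frobenius distance under the best orthogonal alignment. The plan has three steps.

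\emph{Step 1 (subspace distance).} Take $L$ as the perturbed matrix and $L_{\mathrm{block}} = L - E$ as the unperturbed one, and compare the span of the bottom $K$ eigenvectors of each. Since the gap $\Delta=\lambda_K-\lambda_{K-1}$ is stated for $L$ itself, I would use the Davis--Kahan variant whose separation is measured on the perturbed side. The bottom-$K$ eigenvalues of $L_{\mathrm{block}}$ are all $0$ under perfect separation, since each of the $K$ connected blocks contributes one zero eigenvalue. By Weyl's inequality the remaining eigenvalues of $L_{\mathrm{block}}$ are at least $\lambda_K-\|E\|_2$, and the bottom-$K$ eigenvalues of $L$ are at most $\lambda_{K-1}$.

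\emph{Step 2 (applying Davis--Kahan).} Let $P=U_KU_K^\top$ and $P_b = U_K^{\mathrm{block}}(U_K^{\mathrm{block}})^\top$. From the Sylvester-type identity
\[
(I-P)\,E\,P_b = (I-P)\,L\,P_b - (I-P)\,P_b\,L_{\mathrm{block}},
\]
together with the separation of spectra, one obtains
\[
\|\sin\Theta\|_2 \le \frac{\|E\|_2}{\Delta}.
\]
If Weyl shifts are needed, the bound becomes $\|E\|_2/(\Delta-\|E\|_2)$. In the regime $\|E\|_2\le \Delta/2$ this is at most $2\|E\|_2/\Delta$. In the complementary regime the claimed bound is trivial, because $\|U_K-U_K^{\mathrm{block}}\Phi\|_F\le 2\sqrt K \le 2\sqrt{2K}\,\|E\|_2/\Delta$. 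Either way we get $\|\sin\Theta\|_2\le 2\|E\|_2/\Delta$, and passing to Frobenius norm costs a factor $\sqrt K$, since $\sin\Theta$ has rank at most $K$.

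\emph{Step 3 (alignment).} Let $(U_K^{\mathrm{block}})^\top U_K = W_1\Sigma W_2^\top$ be an SVD, where $\Sigma=\cos\Theta$, and choose $\Phi=W_1W_2^\top$. Then
\[
\|U_K-U_K^{\mathrm{block}}\Phi\|_F^2 = 2\,\mathrm{tr}(I-\cos\Theta) \le 2\|\sin\Theta\|_F^2,
\]
using $1-\cos\theta\le\sin^2\theta$ for $\theta\in[0,\pi/2]$. This gives the bound $\sqrt2\,\|\sin\Theta\|_F\le \sqrt2\cdot\sqrt K\cdot 2\|E\|_2/\Delta = 2\sqrt{2K}\,\|E\|_2/\Delta$, which is exactly \eqref{eq:dk_bound}.

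The main obstacle is Step 2, specifically which gap appears in the denominator. Classical Davis--Kahan needs separation between the bottom-$K$ spectrum of one matrix and the remaining spectrum of the other, whereas the statement uses only the gap of $L$. I would first try the perturbed-side version of Davis--Kahan. If that fails, I would fall back on the Weyl-shift argument with the case split at $\|E\|_2=\Delta/2$, which I expect to absorb the factor of $2$ in the constant.
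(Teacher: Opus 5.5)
Your proof is correct. It is self-contained where the paper has no argument of its own: the paper simply defers to Lemma 7 of Chen and Lei (2018). You rebuild the standard chain behind such a lemma:
\begin{itemize}
\item a $\sin\Theta$ bound obtained from a Sylvester equation;
\item a factor $\sqrt K$ from rank;
\item a factor $\sqrt2$ from Procrustes alignment, using $1-\cos\theta\le\sin^2\theta$.
\end{itemize}
This chain is exactly what produces the constant $2\sqrt{2K}$. The citation buys brevity. Your route makes explicit why the gap of the perturbed matrix $L$, rather than that of $L_{\mathrm{block}}$, may appear in the denominator.

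In fact the obstacle you flag disappears inside your own identity. Since the bottom $K$ eigenvalues of $L_{\mathrm{block}}$ vanish, $L_{\mathrm{block}}P_b=0$, so $(I-P)EP_b=L(I-P)P_b$. On the range of $I-P$ you have $\|Ly\|_2\ge\lambda_K\|y\|_2$. Hence $\|\sin\Theta\|_2=\|(I-P)P_b\|_2\le\|E\|_2/\lambda_K\le\|E\|_2/\Delta$, because $\lambda_{K-1}\ge 0$. This needs no Weyl shift and no case split, and it yields the sharper constant $\sqrt{2K}$. The Weyl fallback is needed only if $L_{\mathrm{block}}$ is read as the block-diagonal part of $L$ with full degrees, as the main text's wording allows. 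In that reading its bottom eigenvalues need not be zero.

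There is one small slip in that fallback. In the regime $\|E\|_2>\Delta/2$ you only know $2\sqrt{2K}\|E\|_2/\Delta>\sqrt{2K}$, and $2\sqrt K>\sqrt{2K}$. So the inequality $2\sqrt K\le 2\sqrt{2K}\|E\|_2/\Delta$ fails when $\|E\|_2/\Delta\in(1/2,1/\sqrt2)$. Your conclusion in that regime is still true, for a different reason. You can note directly that $\|\sin\Theta\|_2\le1<2\|E\|_2/\Delta$. Alternatively, your Step 3 identity with the Procrustes-optimal $\Phi$ gives $\|U_K-U_K^{\mathrm{block}}\Phi\|_F^2=2\,\mathrm{tr}(I-\cos\Theta)\le 2K$, which is enough there.
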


\begin{proof}
    We refer to Lemma 7 in \citet{chen2018network} for a formal proof.
\end{proof}

\section{Other Details and Deferred Results on Experiments}\label{sec:exp_details}

\subsection{Benchmark Methods.}  We compare our method against baseline sampling schemes, including Random Node~\citep{stumpf2005subnets}, Page Rank~\citep{leskovec2006sampling}, Degree-based~\citep{adamic2001search}, Metropolis-Hastings Random Walk~\citep{hubler2008metropolis, stutzbach2006unbiased}, and uniform node sampling. For convenience, implementations of these baselines are available via the Python library of~\citet{rozemberczki2020little}. 

All simulations are repeated over $100$ independent runs. For synthetic data, each run generates a random graph and a random sample from that generated graph. For real data, each run generates a random sample from a fixed graph.  





\subsection{Additional Experimental Results}

\begin{table}[t]
\centering
\caption{Average results on sampling $100$ nodes from the SBM graph with three clusters of
sizes $n_1 = 1000$, $n_2=1000$, $n_3 = 1000$, and intra-cluster edge
probabilities $p_1, p_2, p_3 = 0.01, 0.01, 0.01$, and $q_{ij}=0.001$ for $i,j=1,2,3$.
The signal $f$ is defined in \eqref{eq:pw_random}.  }
\label{tab:mean_estimation_sbm2}
\begin{tabular}{ccccccc}
\toprule
     & 
Random Node &
Page Rank &
Degree Based & 
MHRW &  Uniform & GGCS (ours) \\
\midrule
$\epsilon(S)$ & 0.053   & 0.054 & 0.048 & 0.102 & 0.051 & \textbf{0.046} \\
$\mathcal{B}(S)$  & 0.037 & 0.038 & 0.039 & 0.096 & 0.040 & \textbf{0.035}
\\
\bottomrule
\end{tabular}
\end{table}

\begin{table}[t]
\centering
\caption{Average Cluster Balance Error $\mathcal{B}(S)$ on Amazon Photo dataset with different sampling sizes $S$. }
\label{tab:mean_estimation_real2}
\begin{tabular}{lcccccc}
\toprule
     & 
Random Node &
Page Rank &
Degree Based & 
MHRW &  Uniform & GGCS (ours) \\
\midrule
$|S|=20$ & 0.054 & 0.055 & 0.066 & 0.180 & 0.056 & \textbf{0.050}
\\
$|S|=30$ & 0.047 & 0.047 & 0.057 & 0.171 & 0.044 & \textbf{0.041} \\
$|S|=50$ & 0.036 & 0.036 & 0.048 & 0.157 & 0.035 &  \textbf{0.032} \\
$|S|=100$ & 0.025 & 0.026 & 0.043 & 0.142 & 0.024 & \textbf{0.022} \\
$|S|=300$ & 0.014 & 0.015  & 0.034 & 0.113 & 0.014 & \textbf{0.012} \\
\bottomrule
\end{tabular}
\end{table}

\end{document}